\theoremstyle{plain}
\newtheorem{thm}{Theorem}[section]
\newtheorem{claim}[thm]{Claim}
\newtheorem{corollary}[thm]{Corollary}
\newtheorem{definition}[thm]{Definition}
\newtheorem{lem}[thm]{Lemma}
\newtheorem{proposition}[thm]{Proposition}
\newtheorem{fact}[thm]{Fact}
\newtheorem{rem}[thm]{Remark}
\numberwithin{equation}{section}
\newcommand{\N}{\mathbb{N}}
\newcommand{\R}{\mathbb{R}}
\newcommand{\espan}{\operatorname{span}}
\newcommand{\Ker}{\operatorname{Ker}}
\newcommand{\rank}{\operatorname{rank}}
\begin{document}

\title[Smooth extractions of critical points]{Extraction of critical points of smooth functions on Banach spaces}

\author{Miguel Garc\'ia-Bravo}
\address{ICMAT (CSIC-UAM-UC3-UCM), Calle Nicol\'as Cabrera 13-15.
28049 Madrid SPAIN}
\email{miguel.garcia@icmat.es}

\date{September 2019}

\keywords{Banach space, Morse-Sard theorem, approximation, critical point, diffeomorphic extraction}

\thanks{The author was supported by Programa Internacional de Doctorado de la Fundaci\'on La
Caixa-Severo Ochoa 2016 and partially suported by grant MTM2015-65825-P. .}

\begin{abstract}
Let $E$ be an infinite-dimensional separable Hilbert space. We show that for every $C^1$ function $f:E\to\mathbb{R}^d$, every open set $U$ with $C_f:=\{x\in E:\,Df(x)\; \text{is not surjective}\}\subset U$ and every continuous function $\varepsilon:E\to (0,\infty)$ there exists a $C^1$ mapping $\varphi:E\to\mathbb{R}^d$ such that $||f(x)-\varphi(x)||\leq \varepsilon(x)$ for every $x\in E$, $f=\varphi$ outside $U$ and $\varphi$ has no critical points ($C_{\varphi}=\emptyset$). This result can be generalized to the case where $E=c_0$ or $E=l_p$, $1<p<\infty$. In the case $E=c_0$ it is also possible to get that $||Df(x)-D\varphi(x)||\leq\varepsilon(x)$ for every $x\in E$.
\end{abstract}

\maketitle

\section{Introduction and main results}

Our goal in this paper is to prove the following result:

\begin{thm}\label{main result}
Let $E$ be one of the classical infinite-dimensional Banach spaces  $c_0$ or $l_p$ with $1<p<\infty$. Let $f:E\to \R^d$ be a $C^1$ function and $\varepsilon:E\to(0,\infty)$ a continuous function. Take any open set $U$ containing the critical set of points of $f$, that is $C_f:=\{x\in E:\, Df(x)\;\text{is not surjective}\}$. Then there exists a $C^1$ function $\varphi:E\to\R^d$ such that,
\begin{enumerate}
\item $||f(x)-\varphi(x)||\leq\varepsilon(x)$ for all $x\in E$;
\item $f(x)=\varphi(x)$ for all $x\in E\setminus U$;
\item $D\varphi(x)$ is surjective for all $x\in E$, i.e. $\varphi$ has no critical points; and
\item in the case that $E=c_0$ we also have that $||Df(x)-D\varphi(x)||\leq \varepsilon(x)$ for all $x\in E$.
\end{enumerate}
\end{thm}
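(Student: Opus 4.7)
My overall strategy is to reduce the problem to a combination of a smooth approximation that forces the critical set of $f$ into a ``small'' closed subset $A$ of $U$, together with a diffeomorphic extraction of $A$ inside $U$. Once $A$ has been extracted by a self-map $h$ of $E$ that is a $C^1$ diffeomorphism onto $E \setminus A$, equal to the identity off $U$, and close to the identity on $U$, composing an approximant $g$ of $f$ with $h$ will produce a $C^1$ map $\varphi$ with no critical points, agreeing with $f$ off $U$ and close to $f$ on all of $E$.

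In detail, I would first produce a $C^1$ map $g: E \to \R^d$ that equals $f$ outside $U$, satisfies $\|g(x) - f(x)\| \leq \varepsilon(x)/2$ for all $x$ (and $\|Dg(x) - Df(x)\| \leq \varepsilon(x)/2$ when $E = c_0$), and whose critical set $C_g$ is contained in a closed ``extractable'' subset $A \subset U$. The construction would rely on the existence of $C^1$ smooth partitions of unity and $C^1$ bump functions on $c_0$ and $\ell_p$, combined with a transversality-style perturbation of $f$ inside $U$; the precise form of $A$ has to match whatever diffeomorphic extraction result is available in the space under consideration, e.g.\ a countable union of locally finite-dimensional compacta or similar ``negligible'' class.

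Next I would invoke the appropriate extraction theorem for $E \in \{c_0, \ell_p\}$: for closed negligible $A \subset U$ and every continuous tolerance $\delta: E \to (0,\infty)$ one can produce a $C^1$ diffeomorphism $h: E \to E \setminus A$ with $h(x) = x$ on $E \setminus U$ and $\|h(x) - x\| \leq \delta(x)$, and in the $c_0$ case also $\|Dh(x) - \mathrm{id}\| \leq \delta(x)$. Setting $\varphi := g \circ h$, the chain rule gives $D\varphi(x) = Dg(h(x)) \circ Dh(x)$; since $h(x) \in E \setminus A \subset E \setminus C_g$ and $Dh(x)$ is a linear isomorphism, $D\varphi(x)$ is surjective everywhere, which is (3). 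The identity off $U$ yields (2), and (1) follows from $\|\varphi(x) - f(x)\| \leq \|g(h(x)) - g(x)\| + \|g(x) - f(x)\|$ by choosing $\delta$ small compared to $\varepsilon$ and to a local Lipschitz constant for $g$; in the $c_0$ case (4) is obtained similarly from the extra bound on $Dh - \mathrm{id}$ together with uniform continuity of $Dg$ on bounded sets.

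The step I expect to be the main obstacle is the first one: producing $g$ with a critical set of a form admissible by the extraction theorem, while simultaneously respecting the prescribed support in $U$ and the $C^0$ (respectively $C^1$) closeness to $f$. This is essentially a Morse--Sard type statement in infinite dimensions with prescribed support, which is very sensitive to the target dimension $d$ and to the precise smoothness of the bump functions available in $E$. The asymmetric nature of conclusion (4) — that the derivative estimate is claimed only for $c_0$ — presumably reflects the fact that only in $c_0$ can the extracting diffeomorphism be arranged so that $Dh$ is also close to the identity, whereas in $\ell_p$ the known extractions only guarantee closeness in norm; this dichotomy would therefore be built into the Stage 2 extraction tool and transferred to $\varphi$ through the composition.
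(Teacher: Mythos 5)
Your high-level scheme (approximate inside $U$ so as to tame the critical set, extend by $f$ off $U$, then remove the leftover critical set by a diffeomorphic extraction composed into the map) is indeed the architecture of the paper's proof, and conclusions (1)--(3) would follow exactly as you sketch once the two ingredients are in hand. The $\ell_p$ branch is essentially right: the paper shows that, using the $1$-suppression unconditional basis and a strictly convex $C^1$ norm, one can $C^1$-finely approximate $f$ on $U$ by $g$ whose critical set is \emph{locally contained in a finite union of complemented subspaces of infinite codimension}, and then Proposition~\ref{extractibility of a finite union of complemented subspaces} (built on the $\varepsilon$-strong $C^k$ extraction property from \cite{ADG-B}) produces the required $h:E\to E\setminus C_g$, limited by a cover chosen so that $\|g(h(x))-g(x)\|\le\delta(x)/2$. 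Your phrasing with a pointwise bound $\|h(x)-x\|\le\delta(x)$ is a mild variant of being limited by a cover; either works for (1).

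The genuine gap is in your account of (4). You propose to obtain the derivative estimate in $c_0$ by asking the extraction diffeomorphism $h$ to satisfy $\|Dh(x)-\mathrm{id}\|\le\delta(x)$ in addition to $\|h(x)-x\|\le\delta(x)$. No such $C^1$-controlled extraction theorem is available (and none is used in the paper): near the set being extracted the diffeomorphism must push a neighbourhood onto a ``thinner'' region, and in general this cannot be done with uniformly small derivative distortion. The actual mechanism behind (4) is different and more structural: for $c_0$ (more generally, for spaces with an unconditional basis and an equivalent $C^1$ norm that locally depends on finitely many coordinates), the paper constructs the approximant $g$ on $U$ so that $C_g=\emptyset$ \emph{already} — no extraction step is performed at all in that case, so $\varphi=g$ and the $C^1$-fine estimate $\|Dg-Df\|\le\varepsilon$ carries over verbatim. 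This is the content of Theorem~\ref{approximation of f by another one with extractible critical set--c0}, where the local finite-coordinate dependence of the norm is exploited to kill \emph{all} the derivative contributions except a carefully chosen surjective perturbation $T_j$. In $\ell_p$ ($p\ne 2k$ even) the norm does not have this property, one cannot force $C_g=\emptyset$, and the composition with $h$ destroys the $C^1$ estimate — hence the asymmetry. So the dichotomy lives in Step~1, not in the extraction tool.

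A second, smaller point: you leave open what class of sets $A$ should be in the $\ell_p$ case (``countable union of locally finite-dimensional compacta or similar''). The correct class here is closed sets locally contained in finite unions of infinite-codimensional complemented subspaces, and proving $C_g$ lands there is precisely the delicate transversality-type argument that you correctly flag as the main obstacle; it hinges on the $1$-suppression unconditionality of the basis (via Fact~\ref{fact about suppression basis}) and on choosing the perturbations $T_j$ supported on disjoint infinite blocks of coordinates.
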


We can make $\varphi$ be of class $C^k$ inside the open set $U$, where $k$ denotes the order of smoothness of the space $l_p$, $1<p<\infty$ or $c_0$. A brief explanation of this fact can be found in Remark \ref{higher order of differentiability on U}.

This theorem is a particular case of the following two more technical results.

\begin{thm}\label{technical version_c0}
Let $E$ be an infinite-dimensional Banach space with an unconditional basis and with a $C^1$ equivalent norm $||\cdot||$ that  locally depends on finitely many coordinates. Let $f:E\to \R^d$ be a $C^1$ function and $\varepsilon:E\to(0,\infty)$ a continuous function. Take any open set $U$ such that $C_f\subset U$. Then there exists a $C^1$ function $\varphi:E\to\R^d$ such that,
\begin{enumerate}
\item $||f(x)-\varphi(x)||\leq\varepsilon(x)$ for all $x\in E$;
\item $f(x)=\varphi(x)$ for all $x\in E\setminus U$;
\item $||Df(x)-D\varphi(x)||\leq \varepsilon(x)$ for all $x\in E$; and
\item $D\varphi(x)$ is surjective for all $x\in E$.
\end{enumerate}
\end{thm}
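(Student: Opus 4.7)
The plan is to build $\varphi = f + \sum_{n=1}^\infty g_n$, where each $g_n$ is a $C^1$ perturbation supported in a small ball inside $U$, chosen so that its derivative alters $Df$ in $d$ selected coordinate directions in a way that forces $D\varphi$ to be surjective. The ingredients are: the locally-finite-coordinate dependence of the norm (which yields smooth bumps depending only on a prescribed finite set of coordinates, and smooth partitions of unity), the fact that in the classical spaces $c_0$ and $l_p$ ($1<p<\infty$) every bounded linear functional has vanishing coordinates in the dual basis, so $\|Df(x)(e_j)\|\to 0$ as $j\to\infty$, and the unconditional basis, which lets me assign disjoint coordinate slots to overlapping local perturbations.

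\textbf{Local step.} Fix $x_0\in U$ and a ball $B_0=B(x_0,r_0)\subset U$ on which the norm depends only on coordinates in a finite set $A_0\subset\N$. Shrinking $r_0$ if needed, by continuity of $Df$ I may pick indices $j_1,\dots,j_d\in\N\setminus A_0$ with $\|Df(y)(e_{j_k})\|$ as small as desired for all $y\in B_0$. Let $\eta_0$ be a $C^1$ bump depending only on the $A_0$-coordinates, supported in $B_0$, with $\eta_0(x_0)=1$. Put
\[
g_{x_0}(y)=\delta_0\,\eta_0(y)\sum_{k=1}^{d} y_{j_k}v_k,
\]
where $v_1,\dots,v_d$ is the standard basis of $\R^d$ and $\delta_0>0$ will be fixed later. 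Since $j_k\notin A_0$, a direct calculation gives $Dg_{x_0}(y)(e_{j_k})=\delta_0\eta_0(y)v_k$; hence the restriction of $D(f+g_{x_0})(y)$ to $\espan\{e_{j_1},\dots,e_{j_d}\}$ is represented by the $d\times d$ matrix $\delta_0\eta_0(y)I_d+[Df(y)(e_{j_k})]_k$, invertible wherever $\eta_0(y)$ stays above a fixed positive threshold, provided $\|Df(y)(e_{j_k})\|\ll\delta_0\eta_0(y)$. Because $y$ lives in a bounded ball, $\|g_{x_0}\|_\infty$ and $\|Dg_{x_0}\|_\infty$ are $O(\delta_0)$, so $\delta_0$ can be made as small as needed.

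\textbf{Gluing step.} By paracompactness of $U$ (separable metric), extract a locally finite refinement $\{W_n\}_{n\in\N}$ of such local balls, with a subordinate $C^1$ partition of unity $\{\eta_n\}$ built from the smooth bumps available on $E$. Record the data $(A_n,\eta_n)$ for each $n$. The key combinatorial step is to choose, by induction on $n$, indices $j_{n,1},\dots,j_{n,d}\in\N$ disjoint from $A_n$ and from $\bigcup_{m<n,\,W_m\cap W_n\neq\emptyset}(A_m\cup\{j_{m,1},\dots,j_{m,d}\})$, while additionally ensuring $\|Df(y)(e_{j_{n,k}})\|$ is small on $W_n$; local finiteness makes the forbidden set at each step finite, so infinitely many valid indices remain. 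Set
\[
g_n(y)=\delta_n\,\eta_n(y)\sum_{k=1}^{d}y_{j_{n,k}}v_k,\qquad \varphi:=f+\sum_{n=1}^\infty g_n,
\]
with $\delta_n>0$ chosen small enough that the locally finite sums $\sum_n\|g_n(y)\|$ and $\sum_n\|Dg_n(y)\|$ stay below $\varepsilon(y)$.

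\textbf{Verification and main obstacle.} The series defining $\varphi$ is locally a finite sum, so $\varphi$ is $C^1$ and coincides with $f$ outside $U$, giving (1), (2), (3). For (4), let $y\in U$ and pick $n_0$ with $\eta_{n_0}(y)\geq 1/M(y)$, where $M(y)$ bounds the overlap multiplicity at $y$. By the disjointness of index slots, $Dg_m(y)(e_{j_{n_0,k}})=0$ for all $m\neq n_0$: the direct contribution vanishes since $j_{n_0,k}\notin\{j_{m,\cdot}\}$, and the bump-gradient contribution vanishes since $j_{n_0,k}\notin A_m$ and $\eta_m$ depends only on $A_m$. Thus $D\varphi(y)(e_{j_{n_0,k}})=Df(y)(e_{j_{n_0,k}})+\delta_{n_0}\eta_{n_0}(y)v_k$, and the matrix-dominance argument from the local step shows the restriction of $D\varphi(y)$ to $\espan\{e_{j_{n_0,k}}\}_k$ maps onto $\R^d$. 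Outside $U$, $\varphi=f$ and $Df$ is already surjective by hypothesis. The main obstacle I expect is orchestrating the inductive choice of the $j_{n,k}$ together with the constants $\delta_n$ so that all the competing requirements --- disjointness of coordinates across overlapping cells, smallness of $\|Df(y)(e_{j_{n,k}})\|$ on $W_n$, smallness of $\delta_n$ for the $\varepsilon(y)$-bound after summation, and the surjectivity tolerance $\delta_n\eta_n(y)\gg\|Df(y)(e_{j_{n,k}})\|$ --- can all be simultaneously arranged on a locally finite cover; once this bookkeeping is done, conditions (1)--(4) follow at once.
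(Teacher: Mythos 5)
Your approach --- directly perturbing $f$ by locally--finitely--supported coordinate terms $g_n(y)=\delta_n\eta_n(y)\sum_k y_{j_{n,k}}v_k$ using disjoint ``coordinate slots'' --- is genuinely different from the paper's (which first uses Moulis's $C^1$--fine approximation via the cut-off maps $\Psi_j$ of Lemma~\ref{lema 5 from Az et al} and then adds a generic surjective linear perturbation $T_j(\cdot-x^j)$). However there is a real gap in your local and gluing steps. You take the bumps $\eta_n$ to ``depend only on the $A_n$-coordinates'' for a finite set $A_n\subset\N$, so that $D\eta_m(y)(e_{j_{n_0,k}})=0$ whenever $j_{n_0,k}\notin A_m$. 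But the hypothesis on the norm is that it locally factors through finitely many \emph{arbitrary} functionals $L_1,\dots,L_l\in E^*$, not through coordinate functionals $e^*_j$. A bump of the form $\theta(\|x-c\|)$ then has gradient in $\espan\{L_1,\dots,L_l\}$, and there is no reason any basis vector $e_j$ should lie in $\bigcap_i\Ker L_i$ (for instance a functional $L=\sum_n 2^{-n}e^*_n$ annihilates no $e_j$). Consequently the cross-term $\delta_m\,D\eta_m(y)(e_{j_{n_0,k}})\sum_l y_{j_{m,l}}v_l$ coming from the overlapping bumps $m\ne n_0$ does not vanish; since $\|D\eta_m\|$ is not uniformly small, it cannot be absorbed into $\delta_{n_0}\eta_{n_0}(y)I_d$, and the matrix-dominance argument for property (4) breaks.

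This is precisely the obstruction that drives the paper's design: there each $T^i_m$ is chosen with $T^i_m\notin\espan\{e^*_n,Df^k(x^n),L_{n(1)},\dots,L_{n(l_n)},\dots\}$, and for each $y$ a \emph{point-dependent} test vector $v$ is produced in $\bigcap_{a^*\in A_i}\Ker a^*$, where the finite set $A_i$ deliberately lists all the $L$'s from the balls visible at $y$; that $v$ annihilates $h'_j(y)$, $D\delta_j(y)$ and $Df(x^j)$ simultaneously while $T_m(v)$ hits the target. Your disjoint-index bookkeeping has no analogous mechanism for killing general $L$'s, and fixing it essentially pushes you to a construction of the paper's type. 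Two lesser issues would be repairable if this one were not: the forbidden set at step $n$ must include $A_m$ for \emph{all} $m$ with $W_m\cap W_n\ne\emptyset$, not just $m<n$ (harmless, since the $A_m$'s are fixed by the cover in advance); and the bound $\eta_{n_0}(y)\ge 1/M(y)$ is in tension with $\delta_{n_0}$, but one can arrange $\|Df(\cdot)(e_{j_{n,k}})\|<\delta_n 2^{-n-1}$ on $W_n$ and note that $\sum_{n\in N(y)}\eta_n(y)=1>\sum_{n\in N(y)}2^{-n-1}$ forces some $\eta_n(y)>2^{-n-1}$. Still, without handling general $L$'s the argument only proves the theorem under the extra (unstated) assumption that the norm depends locally on finitely many \emph{basis} coordinates.
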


\begin{thm}\label{technical version_lp}
Let $E$ be an infinite-dimensional Banach space with a $C^1$ strictly convex equivalent norm $||\cdot||$ and with a $1$-suppression unconditional basis $\{e_{n}\}_{n\in\N}$, that is a Schauder basis such that
for every $x=\sum_{j=1}^{\infty}x_j e_j$ and every $j_0\in\N$ we have that
$$
\left\|\sum_{j\in\N, \, j\neq j_0} x_j e_j\right\|\leq \left\|\sum_{j\in\N}x_j e_j\right\|.
$$
Let $f:E\to \R^d$ be a $C^1$ function and $\varepsilon:E\to(0,\infty)$ a continuous function. Then for every open set $U$ such that $C_f\subset U$ there exists a $C^1$ function $\varphi:E\to\R^d$ such that,
\begin{enumerate}
\item $||f(x)-\varphi(x)||\leq \varepsilon (x)$ for every $x\in U$.
\item $f(x)=\varphi(x)$ for all $x\in E\setminus U$.
\item $D\varphi(x)$ is surjective for all $x\in E$.
\end{enumerate}

\end{thm}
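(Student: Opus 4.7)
The plan is to produce $\varphi$ in the form $\varphi := f\circ h$, where $h\colon E\to E$ is an auxiliary $C^1$ self-map equal to the identity outside $U$, with surjective derivative $Dh(x)\colon E\to E$ at every point, and whose image is disjoint from the critical set $C_f$. If such an $h$ can be built, then $\varphi$ automatically has the required properties: it agrees with $f$ on $E\setminus U$, and the chain rule $D\varphi(x)=Df(h(x))\circ Dh(x)$ yields a surjection onto $\R^d$ because $Dh(x)$ is surjective onto $E$ and $Df(h(x))$ is surjective in view of $h(x)\notin C_f$. Keeping $h$ uniformly close to the identity, together with continuity of $f$, gives the value approximation $\|\varphi(x)-f(x)\|\le\varepsilon(x)$ on $U$.

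The map $h$ would be assembled from local pieces. First cover $C_f$ by a locally finite family of open balls $\{B_n\}_{n\in\N}$ contained in $U$, with radii chosen small enough, relative to the modulus of continuity of $f$ and the continuous tolerance $\varepsilon$, that any modification supported in a slightly enlarged ball $B_n'$ and remaining within a prescribed distance of the identity stays inside the allowed error budget. On each such ball I would construct a local \emph{deletion map} $h_n\colon E\to E$, equal to the identity outside $B_n'$, with surjective derivative everywhere, whose image avoids a prescribed portion of $C_f\cap B_n$. The construction of $h_n$ exploits the strict convexity and $C^1$ smoothness of $\|\cdot\|$ to implement a smooth radial-type deformation inside $B_n'$, while the $1$-suppression unconditional basis supplies an auxiliary coordinate direction along which the deformation can be carried out in $E$ without creating kernel in $Dh_n$. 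The final map $h$ is then defined as a suitably controlled infinite composition $\cdots\circ h_2\circ h_1$, which is well-defined and $C^1$ thanks to the local finiteness of the cover, with the local constructions arranged so that every point of $C_f$ is moved out of the range of $h$ by at least one of the $h_n$.

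The main obstacle lies in maintaining surjectivity of $Dh$ under the infinite composition while keeping $h$ uniformly close to the identity; this is also the reason why only value approximation appears in the conclusion. In Theorem \ref{technical version_c0} the norm depends locally on finitely many coordinates, so each $h_n$ essentially perturbs only finitely many coordinates and derivatives can be matched uniformly, enabling the additional conclusion $\|Df-D\varphi\|\le\varepsilon$ there. With merely strict convexity, the local deletion maps will typically affect infinitely many coordinates with potentially unbounded operator-norm distortion, forcing one to settle for $C^0$-control of the composition. A final bookkeeping step verifies that $h(E)\cap C_f=\emptyset$, so that $\varphi=f\circ h$ has no critical points anywhere, completing the argument.
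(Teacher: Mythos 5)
Your plan --- set $\varphi = f\circ h$ with $h\colon E\to E$ a $C^1$ self-map equal to the identity outside $U$, with $Dh(x)$ surjective onto $E$ everywhere and with $h(E)\cap C_f=\emptyset$ --- diverges from the paper at the very first move, and it cannot work as stated because of a topological obstruction. Nothing in the hypotheses prevents $C_f$ from separating $E$. Take $d=1$ and $f(x)=\bigl(e_1^*(x)\bigr)^2$: then $C_f$ is the closed hyperplane $H=\{x\in E: e_1^*(x)=0\}$. Pick $U=\{x: |e_1^*(x)|<1\}$. Any candidate $h$ is the identity on $E\setminus U=\{e_1^*\geq 1\}\cup\{e_1^*\leq -1\}$, so $h(E)$ contains both of these sets; on the other hand $h(E)$ is a connected subset of $E\setminus H$, and $E\setminus H$ has exactly two components, so $h(E)$ must lie entirely in one of them. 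Contradiction. Thus no such $h$ exists, and the constraint $h(E)\cap C_f=\emptyset$, which you genuinely need so that $Df(h(x))$ is surjective in the chain rule, cannot be met for a general $C^1$ function $f$. Even where no global obstruction arises, your local ``deletion maps'' $h_n$ --- $C^1$, the identity near $\partial B_n'$, surjective derivative, image missing a portion of $C_f$ --- are extraction devices whose existence you have not established and which are not available for an arbitrary closed set: the extraction results in the paper (Proposition \ref{extractibility of a finite union of complemented subspaces}) require structural assumptions, namely that the set be locally contained in a finite union of complemented subspaces of infinite codimension, which $C_f$ has no reason to satisfy.

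Your proposal thus mirrors only the second half of the paper's argument and omits the essential first half. The paper first \emph{changes the function}, not merely the variable: Theorem \ref{approximation of f by another one with extractible critical set--lp} perturbs $f$ on $U$ in the $C^1$-fine sense to a new $g$ whose critical set $C_g$ is locally contained in finite unions of complemented subspaces of infinite codimension, exploiting the $1$-suppression unconditional basis (via Fact \ref{fact about suppression basis}), the strict convexity of the norm, the deformation $\Psi_j$ of Lemma \ref{lema 5 from Az et al}, and carefully chosen rank-$d$ linear correctors $T_j$ supported on disjoint blocks of coordinates. Because the perturbation is $C^1$-fine and $C_f\subset U$, setting $g=f$ outside $U$ yields a $C^1$ function on all of $E$ with $C_g\subset U$ thin in the required structural sense. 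Only then is a self-map composed in --- a genuine diffeomorphism $h\colon E\to E\setminus C_g$ from Proposition \ref{extractibility of a finite union of complemented subspaces}, the identity outside $U$ and limited by a cover adapted to $\varepsilon$ --- and $\varphi=g\circ h$ finishes. The perturbation step is precisely what renders the critical set extractable; skipping it is the gap in your proposal.
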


The case $c_0$ and $l_p$, $1<p<\infty$ in Theorem \ref{main result} follow from Theorem \ref{technical version_c0} and Theorem \ref{technical version_lp} respectively. The reader can find the details of why this is so in Remark \ref{last_remark}.

\medskip

Note that the approximating function that we build does not have any critical point, hence it is an open mapping. 

\medskip

The classical Morse-Sard theorem \cite{Morse, Sard} states that for a given $C^k$ function $f:\R^n\to\R^d$, if $k\geq \max\{n-d+1, 1\}$ then
its set of critical values is of Lebesgue measure zero in $\mathbb{R}^{d}$. This set is defined to be the image of the set of critical points, which in turn is defined as $C_f=\{x\in\R^n:\,\rank Df(x) \;\text{is not maximum}\}$.

In general if $E$ and $F$ are Banach spaces, for a differentiable mapping
$f:E\longrightarrow F$, $C_f$ stands for the set of points $x\in E$ at which the differential $Df(x)$ is not surjective, and $f(C_{f})$ is thus the set of critical values of $f$. In the case that $E$ is of infinite dimension a natural question appears: is it possible to know that $f(C_f)$ is small in any sense by just assuming enough regularity conditions on $f$? Unfortunately the answer is no because there exist $C^{\infty}$ smooth functions $f:\ell_2\to\R$ so that their set of critical  values $f(C_{f})$ contain intervals (see Kupka's counterexample \cite{Kupka}).

A weaker question that we can ask ourselves is if at least any continuous mapping can be uniformly approximated by another one with {\em small} critical set of values. Let us mention that for many applications of the Morse-Sard theorem this is sufficient.

The first result of that type was in the case of a continuous function $f:E\to\R$, where $E$ is a separable Hilbert space. Eells and McAlpin proved in \cite{EellsMcAlpin} that in such case $f$ can be uniformly
approximated by a smooth function $g$ whose set of critical values $g(C_{g})$ is of measure zero. This was so called an {\em approximate Morse-Sard result}. However in \cite{AzagraCepedello}, a much stronger result was obtained by  Azagra and  Cepedello-Boiso: every
continuous mapping from $E$, a separable Hilbert space, into $\mathbb{R}^{d}$ can be uniformly approximated by smooth mappings {\em with no
critical points}. H\'{a}jek and  Johanis \cite{HJ} established a similar result for $d=1$ in the case that $E$
is a separable Banach space which contains $c_0$ and admits a $C^k$ smooth bump function. Also in the case
$d=1$, Azagra and  Jim\'enez-Sevilla \cite{AzagraJimenez} were able to characterize the class of separable Banach spaces $E$ such that any continuous $f:X\to\R$ can be uniformly approximated by another one of class $C^1$ without any critical point, as those Banach spaces $E$ with separable dual.

Let us comment finally about the very recent paper \cite{ADG-B}. In this work, due to Azagra, Dobrowolski and the author, many of the previous results are generalized. It is proved that for the case of $E=c_0,\ell_p,L^p$, $1<p<\infty$, and $F$ a quotient of $E$, any continuous function $f:E\to F$ can be uniformly approximated by a $C^k$ smooth one with no critical points, where $k$ is denoting the order of smoothness of the space $E$ (see \cite[Theorems 1.6, 1.7]{ADG-B} for more details).

\medskip

In the present paper we consider a different approach to this problem. Suppose that our given continuous function $f:E\to\R^d$ is already of class $C^1$ and we know that its set of critical points $C_f$ is included in some open set $U$. The question is, are we able not only to uniformly approximate $f$ by another $C^1$ function $\varphi$ without critical points but also to make $\varphi$ be equal to $f$ outside $U$?

The key will be to use a $C^1$-fine approximating result for the function $f|_U:U\to\R^d$, and this is provided by the results of  \cite{MoulisApproximation,AFG-GJL}. This corresponds to Section \ref{$C^1$-fine approximation controlling the set of critical points} of the paper.

\medskip

As a matter of fact, in \cite{MoulisApproximation}, Moulis was already able to relate $C^1$-fine approximations with approximate Morse-Sard type results. She proved that for every $C^1$ function $f:E\to F$, where $E$ is an infinite-dimensional separable Hilbert space and $F$ is a separable Hilbert space, and every continuous function $\varepsilon:E\to (0,\infty)$ there exists a $C^{\infty}$ function $g:E\to F$ such that $||f(x)-g(x)||\leq \varepsilon(x)$, $||Df(x)-Dg(x)||\leq \varepsilon(x)$ for every $x\in E$ and such that $g(C_g)$ has empty interior in $F$. Obviously we strengthen this conclusion by being able to get $C_g=\emptyset$ and considering other Banach spaces, not necessarily Hilbertian. On the other hand for the Hilbert case we cannot write as the target space an infinite-dimensional Banach space as Moulis does and also we do not get the approximation in the derivatives.

\medskip

The proof of both Theorems \ref{technical version_c0} and \ref{technical version_lp} will follow these two steps:
\begin{itemize}
\item {\bf Step 1}: Firstly we construct a $C^1$ function $g:U\to\R^d$ such that $||f(x)-g(x)||\leq \varepsilon(x)/2$ and $||Df(x)-Dg(x)||\leq\varepsilon(x)$ and such that $C_g$ either is the empty set  for the case of Theorem \ref{technical version_c0}, or is locally contained in a finite union of complemented subspaces of infinite codimension in $E$ for the case of Theorem \ref{technical version_lp}. 
\item {\bf Step 2}: We extend the function $g$ to the whole space $E$ by letting it be equal to $f$ outside $U$. Because of the $C^1$-fine approximation of Step 1 this extension is still of class $C^1$ on $E$. For the case of Theorem \ref{technical version_c0} we are done. For the case of Theorem \ref{technical version_lp} we must find a $C^1$ diffeomorphism $h:E\to E\setminus C_g$ which will be the identity outside $U$ and such that $\left\{ \{x, h(x) \} : x\in E \right\}$ refines
$\mathcal{G}$ (in other words, $h$ is {\em limited by} $\mathcal{G}$), where $\mathcal{G}$ is an open cover of $E$
by open balls $B(z, \delta_z)$ chosen in such a way that if $x, y\in B(z, \delta_z)$ then
$$
\|\varphi(y)-\varphi(x)\|\leq \frac{\varepsilon(z)}{4}\leq\frac{\varepsilon(x)}{2}.
$$
The existence of such a diffeomorphism $h$ follows by a result of Section \ref{A comment about the strong $C^k$ extraction property}, which is a consequence of some results on extractibility theory from the paper \cite[Section 2]{ADG-B}. Then, the mapping $\varphi(x):=g(h(x))$ has no critical point, is equal to $f$ outside $U$ and satisfies
$\|f(x)-\varphi(x)\|\leq\varepsilon(x)$ for all $x\in E$.
\end{itemize}

Let us fix now some notations and definitions.

We call $\{e_n\}_{n\in\N}$ the unconditional basis of  $E$ and $\{e^{*}_{n}\}_{n\in\N}$ the associated biorthogonal functionals. Let also $P_n:E\to \espan\{e_1,\dots,e_n\}$ be the natural projections defined as $P_n(\sum^{\infty}_{j=1}x_je_j)=\sum^{n}_{j=1}x_je_j$ and let $K_u$ be the unconditional constant for the basis. This constant is defined to be the least number such that for every $\{\varepsilon_j\}^{n}_{j=1}\subset\{-1,+1\}$ and every $\sum^{n}_{j=1}x_je_j\in E$, 
$$||\sum^{n}_{j=1}\varepsilon_j x_j e_j||\leq K_u||\sum^{n}_{j=1}x_j e_j||.$$
Note that $||P_n||\leq K_u$ for every $n\in\N$. We shall not confuse $K_u$ with the suppression unconditional constant $K_s$, defined as the least number such that for all (equivalent finite) set $A\subset \N$, $\|P_A\|\leq K_s$,
where $P_A$ represents the projection $P_A(x)=\sum_{j\in A} x_je_j$. We have the relation $K_s\leq K_u\leq 2K_s$. Observe also that in the statement of Theorem \ref{technical version_lp} it is required that $K_s=1$.

We say that the norm $||\cdot||$ locally depends on finitely many coordinates if for every $x\in E$ there exists a natural number $l_x$, an open neighbourhood $U_x$ of $x$, some functionals $L_1,\dots,L_{l_x}\in E^*$ and a function $\gamma:\R^{l_x}\to \R$ such that
$$||y||=\gamma(L_1(y),\dots,L_{l_x}(y))$$
for every $y\in U_x$. In particular we will make use of the fact that if the norm is of class $C^1$ and we take $v\in \bigcap^{l_x}_{j=1}\Ker L_j$, then
$$D||\cdot||(y)(v)=\lim_{t\to 0}{||y+tv||-||y||\over t}=0,$$
for every $y\in U_x\setminus\{0\}$.

A function $h:E\to E$ is said to be limited by an open cover $\mathcal{G}$ provided that the set $\left\{ \{x, h(x)\} : x\in E\setminus
X \right\}$ refines $\mathcal{G}$; that is, for every $x\in E\setminus X$, we may find a $G_x\in\mathcal{G}$ such
that both $x$ and $h(x)$ are in $G_x$.

When we say that a closed set $X\subset E$ is locally contained in a finite union of complemented subspaces of infinite codimension we mean that for every $x\in X$ there exists an open neighbourhood $U_x$ of $x$ and some closed subspaces $E_1,\dots,E_{n_x}\subset E$ complemented in $E$ and of infinite codimension such that
$$X\cap U_x\subset \bigcup^{n_x}_{j=1}E_j.$$

Finally for a $C^1$ function $f:E\to \R^d$, where $f=(f^1,\dots f^d),$ we write its Fr\'echet derivative at a point $x\in E$ by $Df(x)=(Df^1(x),\dots, Df^d(x)):E\to\R^d$, where each $Df^i(x)$ is a continuous linear functional on $E$. If $f $ is $\R$-valued we sometimes simply write $f'(x)$ for its derivative.

We will also use indistinctly the symbol $||\cdot||$ to denote the norm in $E$, $E^*$ and the euclidean norm in $\R^d$.

\section{A comment about the strong $C^k$ extraction property}\label{A comment about the strong $C^k$ extraction property}

In the proof of Theorem \ref{technical version_lp} we will need the following.

\begin{proposition}\label{extractibility of a finite union of complemented subspaces}
Let $E$ be a  Banach space with a $C^k$ smooth norm. Take an open cover $\mathcal{G}$ of an open set $U$ and a closed set $X\subset U$ that is locally contained in a finite union of complemented subspaces of infinite codimension in $E$. Then there exists a $C^k$ diffeomorphism $h:E\to E\setminus X$ which is the identity outside $U$ and is limited  by $\mathcal{G}$. 

\end{proposition}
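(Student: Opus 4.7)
The strategy is to reduce the statement to the basic ``extraction of a single complemented subspace'' building block from \cite[Section 2]{ADG-B}: a complemented subspace $F$ of infinite codimension in a $C^k$ smoothly normed Banach space admits a $C^k$ diffeomorphism $E\to E\setminus F$ that is the identity outside any prescribed neighbourhood of $F$ and limited by any prescribed open cover. Granting this building block, the proof of the proposition is a local-to-global argument combined with a careful iteration that respects the cover $\mathcal{G}$.

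The first step is a localization. For each $x\in X$, the hypothesis provides an open neighbourhood $W_x\subset U$ and complemented subspaces $E_{x,1},\ldots,E_{x,n_x}\subset E$ of infinite codimension such that $X\cap W_x\subset\bigcup_{j=1}^{n_x}E_{x,j}$. After shrinking, I may assume $\overline{W_x}$ is contained in some member of $\mathcal{G}$. By paracompactness of $E$, I then pass to a locally finite open refinement $\{V_i\}_{i\in I}$ of the cover $\{W_x:x\in X\}\cup\{E\setminus X\}$ of $E$; on each $V_i$ meeting $X$, a finite list of complemented subspaces of infinite codimension ``sees'' the relevant portion of $X$.

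On such a patch $V_i$, the closed set $X\cap V_i$ lies in a finite union $F_{i,1}\cup\cdots\cup F_{i,m_i}$ of complemented subspaces of infinite codimension. I would extract it by iterating the basic building block: extract $F_{i,1}$ first via a diffeomorphism $h_i^{(1)}$, then $h_i^{(1)}(F_{i,2})$, and so on. The catch, and the main technical obstacle, is that after the first extraction the images $h_i^{(1)}(F_{i,j})$ for $j\geq 2$ need no longer be linear subspaces. I would handle this exactly as in the extractibility theory of \cite[Section 2]{ADG-B}: either by performing each extraction in a further sub-neighbourhood chosen disjoint from the remaining $F_{i,j}$'s (subdividing $V_i$ into finitely many patches labelled by which $F_{i,j}$ is ``active'' there) so that each instance reduces to a single-subspace extraction, or by invoking directly a strong-extractibility statement for finite unions of such subspaces from that section.

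The last step is the global assembly. I would enumerate the patches and form the composition $h$ of the local extractions $h_i$. Local finiteness of $\{V_i\}_{i\in I}$ ensures that only finitely many factors act non-trivially in any neighbourhood of a given point, so $h$ is a well defined $C^k$ diffeomorphism of $E$ onto $E\setminus X$ equal to the identity outside $U$. To keep $h$ limited by $\mathcal{G}$, I would fix at the outset a continuous function $\delta:E\to(0,\infty)$ such that each ball $B(z,\delta(z))$ lies in some member of $\mathcal{G}$, and arrange each factor $h_i$ to displace points by less than $\delta/2^i$ along a well-ordering of $I$. Local finiteness then forces the total displacement at any point to lie inside a single element of $\mathcal{G}$, yielding the conclusion.
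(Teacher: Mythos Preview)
Your approach is in spirit the same as the paper's: reduce to the single-subspace extraction from \cite[Section~2]{ADG-B}, handle finite unions locally, then pass from local to global. The difference is one of packaging. The paper does not redo the local-to-global argument or the finite-union iteration by hand; instead it invokes the abstract notion of the \emph{$\varepsilon$-strong $C^k$ extraction property} from \cite{ADG-B} and three ready-made facts: (i) each complemented subspace of infinite codimension has this property (\cite[Theorem~1.4]{ADG-B}); (ii) the property is closed under finite unions (Lemma~\ref{Properties of C^p extractibility}(3) here, proved by extracting $Y_1=Y\cap X_1$ first and then $Y_2\setminus Y_1$ inside the open set $U\setminus Y_1$); (iii) a closed set with the property \emph{locally} can be globally extracted, limited by any cover (\cite[Theorem~2.24]{ADG-B}). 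With these in hand, the proof of the proposition is three lines.

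Your sketch is correct in outline, but the two places you flag as delicate are exactly where the abstract framework pays off. First, your ``catch'' about $h_i^{(1)}(F_{i,j})$ no longer being linear is genuine; your suggestion of subdividing $V_i$ according to which $F_{i,j}$ is active does not obviously work, since $X\cap V_i$ need not decompose into pieces each lying in a single $F_{i,j}$ on any sub-patch. The clean fix is precisely Lemma~\ref{Properties of C^p extractibility}(3): one never needs the extracted sets to stay linear, only to retain the strong extraction property, which they do by parts (1) and (2) of that lemma. Second, your global assembly with displacement bounds $\delta/2^i$ needs more care than you indicate, since $\delta$ must be evaluated at successively displaced points; this bookkeeping is what \cite[Theorem~2.24]{ADG-B} encapsulates. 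So your proof would work, but you are essentially reproving those two results; citing them, as the paper does, is both shorter and avoids the imprecision.
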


To achieve this we will use some recent results on diffeomorphic extraction of closed sets that appear in \cite[Section 2]{ADG-B}. In that paper the next definitions are introduced.
\begin{definition}
{\em A subset $X$ of Banach space $E$ has the strong $C^k$ extraction property  with respect to an open set $U$
if $X\subseteq U$, $X$ is relatively closed in $U$, and for every open set $V\subseteq U$, every subset $Y\subseteq
X$ relatively closed in $U$ there exists a $C^{k}$ diffeomorphism $\varphi$ from $U\setminus Y$ onto
$U\setminus(Y\setminus V)$ which is the identity
on $(U\setminus V)\setminus Y$. If in addition for any $\varepsilon>0$ we can ask the diffeomorphism not to move points more than $\varepsilon$ (that is, $||\varphi(x)-x||\leq\varepsilon$ for all $x$) we will say that $X$ has the $\varepsilon$-strong $C^k$ extraction property with respect to $U$.\\
We will also say that such a closed set $X$ has locally the strong (or $\varepsilon$-strong) $C^k$ extraction
property if for every point $x\in X$ there exists an open neighbourhood $U_x$ of $x$ such that $X\cap
U_x$ has the strong ($\varepsilon$-strong respectively) $C^k$ extraction property with respect
to every open set $U$ for which $X\cap{U_x}$ is a relatively closed subset of $U$.}

\end{definition}
We have the following properties.
\begin{lem}\label{Properties of C^p extractibility}
Let us suppose that $X,X_1,X_2\subset E$ have the $\varepsilon$-strong $C^k$ extraction property with respect to an open set
$U$ of $E$. Then
\begin{enumerate}
\item  For every set $Y\subseteq X$, relatively closed in $U$, $Y$ has the $\varepsilon$-strong
$C^k$ extraction property with respect to $U$;
\item For every open subset $U'\subseteq U$, $X\cap U'$ has
the $\varepsilon$-strong $C^k$ extraction property with respect to $U'$.
\item $X_1\cup X_2$ has the $\varepsilon$-strong $C^k$ extraction property with respect to $U$.
\end{enumerate}
\end{lem}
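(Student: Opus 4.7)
The plan is to verify each assertion in turn; (1) is immediate, (2) is the key step, and (3) will follow by combining (1) with (2). For (1), I would simply unfold the definition: if $Y\subseteq X$ is relatively closed in $U$, then any $Z\subseteq Y$ relatively closed in $U$ is automatically a relatively closed subset of $X$, so the $\varepsilon$-strong extraction property of $X$ applied to $(Z,V)$ for any open $V\subseteq U$ already witnesses that same property for $Y$.

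For (2), the plan is to push $Y$ up to a relatively closed subset of $X$ in $U$ and then cut back down. Given $V\subseteq U'$ open and $Y\subseteq X\cap U'$ relatively closed in $U'$, I would set $Y_0:=\overline{Y}^{\,U}$; since $X$ is relatively closed in $U$ we have $Y_0\subseteq X$, and since $Y$ is relatively closed in $U'$ we have $Y_0\cap U'=Y$ and $Y_0\setminus Y\subseteq U\setminus U'$. Applying the $\varepsilon$-strong property of $X$ to $(Y_0,V)$ produces a diffeomorphism $\varphi\colon U\setminus Y_0\to U\setminus (Y_0\setminus V)$ which is the identity on $(U\setminus V)\setminus Y_0$. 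Because $V\subseteq U'$ this identity locus contains $(U\setminus U')\setminus Y_0$, and combining this with the bijectivity of $\varphi$ yields the set-theoretic identity $\varphi(U'\setminus Y)=U'\setminus(Y\setminus V)$. The restriction $\psi:=\varphi|_{U'\setminus Y}$ is then the desired diffeomorphism, inheriting the identity on $(U'\setminus V)\setminus Y$ and the $\varepsilon$-closeness.

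For (3), I would extract $X_1$ and $X_2$ in succession. Given a relatively closed $Y\subseteq X_1\cup X_2$ in $U$, split $Y=Y_1\cup Y_2$ with $Y_1:=Y\cap X_1$ (closed in $U$) and $Y_2:=Y\setminus X_1\subseteq X_2$ (closed in $U\setminus X_1$). Using (1), apply the $\varepsilon/2$-strong extraction of $X_1$ to $(Y_1,V)$ to get $\varphi_1\colon U\setminus Y_1\to U\setminus (Y_1\setminus V)$, identity outside $V$. Then by (2) applied to the open set $U\setminus X_1$ together with (1), apply the $\varepsilon/2$-strong extraction of $Y_2$ with respect to $U\setminus X_1$ and open set $V\cap(U\setminus X_1)$, yielding $\tilde{\varphi}_2\colon U\setminus Y=(U\setminus X_1)\setminus Y_2\to (U\setminus X_1)\setminus (Y_2\setminus V)$, again identity outside $V$. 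The composition $\Phi:=\varphi_1\circ\tilde{\varphi}_2$ should then map $U\setminus Y$ diffeomorphically onto $U\setminus(Y\setminus V)$, be the identity on $(U\setminus V)\setminus Y$, and move points by at most $\varepsilon$.

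The main obstacle in both (2) and (3) is purely the set-theoretic bookkeeping of domains and codomains; the essential leverage in each case is that the diffeomorphisms produced by the extraction property are identity outside $V$, which both confines their action to $U'$ in (2) and, in (3), makes $\varphi_1$ fix $Y_2\setminus V$ pointwise so that the composition lands exactly on $U\setminus(Y\setminus V)$ rather than on some strictly smaller set.
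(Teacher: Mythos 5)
Your (1) matches the paper's, and your argument for (2) is a genuine addition: the paper only cites \cite[Lemma 2.22(2)]{ADG-B}, whereas your closure argument ($Y_0=\overline{Y}^{\,U}$, which lies in $X$ because $X$ is relatively closed in $U$, together with the observation that $\varphi$ is the identity on $(U\setminus V)\setminus Y_0\supseteq(U\setminus U')\setminus Y_0$ so that it restricts to a bijection of the parts inside $U'$) is a correct, self-contained proof of that inheritance property.

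Part (3), however, has a genuine gap. You extract $Y_2:=Y\setminus X_1$ with respect to the ambient open set $U\setminus X_1$, so the diffeomorphism you obtain has domain $(U\setminus X_1)\setminus Y_2$, and you assert this equals $U\setminus Y$. That equality is false in general: any point of $(X_1\setminus Y)\cap U$ lies in $U\setminus Y$ but not in $(U\setminus X_1)\setminus Y_2$, so the composition $\varphi_1\circ\tilde{\varphi}_2$ is simply undefined on $(X_1\setminus Y)\cap U$, and nothing in the extraction framework lets you extend it there. The fix — and this is exactly what the paper does — is to perform the second extraction with respect to $U\setminus Y_1$ rather than $U\setminus X_1$: the paper takes $Y_1=Y\cap X_1$, $Y_2=Y\cap X_2$, notes $Y_2\setminus Y_1=Y_2\cap(U\setminus Y_1)$ has the $\varepsilon$-strong property with respect to $U\setminus Y_1$ by (2) and (1), and extracts there, so the domain of $\varphi_2$ is $(U\setminus Y_1)\setminus(Y_2\setminus Y_1)=U\setminus Y$ exactly. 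Your key mechanism — that $\varphi_1$ fixes the still-to-be-extracted part outside $V$ pointwise so the composition lands on $U\setminus(Y\setminus V)$ — is the right one and is what makes the paper's bookkeeping close; but you must carve out only $Y_1$, not all of $X_1$, before the second step.
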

\begin{proof}
\hspace{6cm}
\begin{enumerate}
\item This follows directly from the definition.
\item See \cite[Lemma 2.22 (2)]{ADG-B}.
\item Take $Y\subseteq X_1\cup X_2$ relatively closed in $U$ and an open set $V\subseteq U$. We want to find a $C^k$ diffeomorphism $\varphi$ from $U\setminus Y$ onto $U\setminus (Y\setminus V)$ which is the identity on $(U\setminus V)\setminus Y$ and does not move points more than $\varepsilon$.\\
Define the sets $Y_1=Y\cap X_1$ and $Y_2=Y\cap X_2$, which are relatively closed in $U$ and satisfy $Y_1\cup Y_2=Y$. In particular by (1) they have the $\varepsilon$-strong $C^k$ extraction property with respect to $U$.

\begin{enumerate}
\item [(a)] There exists a $C^k$ diffeomorphism $\varphi_1:U\setminus Y_1\to U\setminus (Y_1\setminus V)$ which is the identity on $(U\setminus V)\setminus Y_1$ and does not move points more than $\varepsilon/2$.
\item [(b)] For the open set $U\setminus Y_1$, using (2) we know that $Y_2\cap (U\setminus Y_1)=Y_2\setminus Y_1$ has the $\varepsilon$-strong $C^k$ extraction property with respect to $U\setminus Y_1$. Hence there exists a $C^k$ diffeomorphism $\varphi_2:U\setminus(Y_1\cup Y_2)\to (U\setminus Y_1)\setminus ((Y_2\setminus Y_1)\setminus V)$, which is the identity on $((U\setminus Y_1)\setminus V)\setminus (Y_2\setminus Y_1)$ and does not move points more than $\varepsilon/2$.
\end{enumerate}
Observe that
\begin{align*}
\varphi_1((U\setminus Y_1)\setminus ((Y_2\setminus Y_1)\setminus V))&=\left[ U\setminus (Y_1\setminus V)\right] \setminus \left[ \varphi_1((Y_2\setminus Y_1)\setminus V)\right] =\\
&=\left[ U\setminus (Y_1\setminus V)\right]\setminus \left[ (Y_2\setminus Y_1)\setminus V\right] =  \\
&=U\setminus (Y_1\cup Y_2)\setminus V).
\end{align*}
Hence we can define a $C^k$ diffeomorphism
$$\varphi:=\varphi_1\circ\varphi_2:U\setminus (Y_1\cup Y_2)\to U\setminus ((Y_1\cup Y_2)\setminus V,$$
which is the identity on $(U\setminus V)\setminus (Y_1\cup Y_2)$ and does not move points more than $\varepsilon$.

\end{enumerate}
\end{proof}
For this kind of sets the following abstract extractibility result holds.
\begin{thm}{\cite[Theorem 2.24]{ADG-B}}\label{abstract extractility result}
Let $E$ be a Banach space and $X$ be a closed subset of $E$ which has locally the $\varepsilon$-strong $C^k$
extraction property. Let $U$ be an open subset of $E$ and $\mathcal{G}=\left\lbrace G_r\right\rbrace _{r\in\Omega}$
be an open cover of $E$. Then there exists a $C^k$ diffeomorphism $g$ from  $E\setminus
(X\setminus U)$ onto $E\setminus X$ which is the identity on $(E\setminus U)\setminus X$ and is limited by $\mathcal{G}$.
\end{thm}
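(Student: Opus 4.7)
The plan is to reduce the global extraction to a locally finite sequence of local extractions provided by the hypothesis and then glue them into a single $C^k$ diffeomorphism. The local $\varepsilon$-strong extraction property supplies arbitrarily small diffeomorphisms with prescribed supports, and paracompactness of the Banach space $E$ (which is metrisable) provides the locally finite scaffolding. I will construct the inverse $h=g^{-1}:E\setminus X\to E\setminus(X\setminus U)$ and invert at the end.

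First I would set up the cover. For each $x\in X\cap U$ the hypothesis yields an open neighbourhood $W_x$ such that $X\cap W_x$ has the $\varepsilon$-strong $C^k$ extraction property with respect to every open set in which it is relatively closed; after shrinking, I arrange $W_x\subseteq U$ and $W_x\subseteq G_{r(x)}$ for some $r(x)\in\Omega$. Adjoining open neighbourhoods of the remaining points of $E$ subordinate to $\mathcal{G}$ and missing $X\cap U$ yields an open cover of $E$; by paracompactness I take a locally finite open refinement, and by a standard shrinking argument I produce chains $V_i\Subset V'_i\Subset W_{x(i)}$ with $\{V'_i\}$ still locally finite and $\{V_i\}_{i\in I}$ covering $X\cap U$.

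Next I would extract sequentially. I well-order (or countably enumerate, using separability) the indices $i$ for which $V_i$ meets $X\cap U$, and define the closed sets $A_n=X\setminus\bigcup_{k\le n}V_k$. For each such $n$, the set $Y_n=A_{n-1}\cap W_{x(n)}$ is relatively closed in $W_{x(n)}$ and contained in $X\cap W_{x(n)}$, so the $\varepsilon_n$-strong property of $X\cap W_{x(n)}$ applied with inner open subset $V=V_n$ yields a $C^k$ diffeomorphism
\[
\psi_n:W_{x(n)}\setminus Y_n\longrightarrow W_{x(n)}\setminus(Y_n\setminus V_n)
\]
that is the identity on $(W_{x(n)}\setminus V'_n)\setminus Y_n$ and moves points at most $\varepsilon_n$. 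Since $V'_n\Subset W_{x(n)}$, extending $\psi_n$ by the identity on $E\setminus V'_n$ produces a $C^k$ diffeomorphism $\varphi_n:E\setminus A_{n-1}\to E\setminus A_n$; for $i$ with $V_i$ disjoint from $X\cap U$ I simply set $\varphi_i=\mathrm{id}$.

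Finally I assemble. Local finiteness of $\{V'_i\}$ guarantees that around every point only finitely many $\varphi_n$ differ from the identity, so the partial composites $\varphi_n\circ\cdots\circ\varphi_1$ stabilise on a neighbourhood of each point and converge to a $C^k$ diffeomorphism $h:E\setminus X\to E\setminus(X\setminus U)$; setting $g=h^{-1}$ gives the required diffeomorphism, and it is the identity on $(E\setminus U)\setminus X$ because every $V'_i$ used in a nontrivial $\varphi_n$ lies in $U$. The step I expect to be hardest is ensuring that $g$ is limited by $\mathcal{G}$: since $g(x)$ may be produced by a chain of $\varphi_n$'s with a priori different parents $G_{r(n)}$, I would first pre-refine $\mathcal{G}$ to a locally finite open cover $\mathcal{G}'=\{G'_r\}$ whose closures still refine $\mathcal{G}$, demand that each $W_{x(i)}$ lie inside a single $G'_{r(i)}$, and then choose the $\varepsilon_n$'s summably small and smaller than $\dist(\overline{G'_{r(n)}},E\setminus G_{r(n)})$ so that the cumulative displacement of any point under the composition stays within a single element of $\mathcal{G}$.
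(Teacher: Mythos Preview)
The paper does not supply its own proof of this statement: it is quoted verbatim as \cite[Theorem 2.24]{ADG-B} and used as a black box in the proof of Proposition~\ref{extractibility of a finite union of complemented subspaces}. So there is no proof in the present paper to compare your attempt against.

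That said, your sketch follows the standard architecture one expects for this type of result (and essentially the one in \cite[Section 2]{ADG-B}): refine to a locally finite cover subordinate both to $\mathcal{G}$ and to the neighbourhoods granted by the local $\varepsilon$-strong property, perform the extractions one cell at a time with supports in the shrunken sets $V'_i$, and let local finiteness guarantee that the infinite composite is a well-defined $C^k$ diffeomorphism. Two places deserve more care than you gave them. First, you invoke separability to enumerate the index set, but the theorem is stated for an arbitrary Banach space; you need either to argue via paracompactness and a $\sigma$-discrete refinement (metrisable spaces admit these) or to run a transfinite composition, checking that local finiteness still forces only finitely many nontrivial factors at each point. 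Second, the ``limited by $\mathcal{G}$'' conclusion is, as you anticipated, the delicate point: your proposed fix of choosing the $\varepsilon_n$ summably small and smaller than $\dist(\overline{G'_{r(n)}},E\setminus G_{r(n)})$ is on the right track, but you must also ensure that the intermediate images $\varphi_{n}\circ\cdots\circ\varphi_1(x)$ never leave the element of $\mathcal{G}'$ containing $x$, otherwise a later $\varphi_m$ could act on the image even though $x\notin V'_m$. The usual remedy is a star-refinement (or barycentric refinement) of $\mathcal{G}$ together with the requirement that each $V'_i$ have diameter small relative to the Lebesgue number of that refinement.
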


\begin{proof}[Proof of Proposition \ref{extractibility of a finite union of complemented subspaces}]
For every $x\in X$ there exists an open neighbourhood $U_x$ of $x$ and some closed subspaces $E_1,\dots,E_{n_x}\subset E$ complemented in $E$ and of infinite codimension such that
$$X\cap U_x\subseteq \bigcup^{n_x}_{j=1}E_j.$$

If $E$ admits an equivalent $C^k$ smooth norm it is known (see for instance \cite[Theorem 1.4]{ADG-B}) that given a complemented subspace $H\subset E$ of infinite codimension and the open set 
$U_x$, then $H\cap U_x$ has the $\varepsilon$-strong $C^k$ extraction property with respect to any open set $U'$ for which $H\cap U_x$ is a relatively closed subset of $U'$. Therefore thanks to Lemma \ref{Properties of C^p extractibility} $(3)$ the set $\bigcup^{n_x}_{j=1}E_j\cap U_x$ has the $\varepsilon$-strong $C^k$ extraction property with respect to any open set $U'$ for which $\bigcup^{n_x}_{j=1}E_j\cap U_x$ is relatively closed on $U'$. 

Now, using Lemma \ref{Properties of C^p extractibility} $(1)$, the set $X\cap U_x\subseteq \bigcup^{n_x}_{j=1}E_j\cap U_x$ has the $\varepsilon$-strong $C^k$ strong extraction property with respect to any open set $U'$ for which $X\cap U_x$ is relatively closed on $U'$. And this means that $X$ has locally the $\varepsilon$-strong $C^k$ strong extraction property. To conclude the proof apply Theorem \ref{abstract extractility result}, noting that we have $X\subset U$ and hence $X\setminus U=\emptyset$.

\end{proof}
For more information about diffeomorphic extraction of closed sets in Banach spaces see for instance \cite{Be,West,Renz,Renz1972,Dobro,Azagra,AzagraDobrowolski,ADG-B}.

\section{$C^1$-fine approximation controlling the set of critical points}\label{$C^1$-fine approximation controlling the set of critical points}

Let us proceed with Step 1 of the scheme of the proof of the main Theorems \ref{technical version_c0} and \ref{technical version_lp}, described in the introduction. We intend to prove the following two theorems.

\begin{thm}\label{approximation of f by another one with extractible critical set--c0}
Let $E$ be an infinite-dimensional Banach space with an unconditional basis and with a $C^1$ equivalent norm that  locally depends on finitely many coordinates. Let $U$ be an open subset of $E$, $f:U\to \R^d$ a $C^1$ function and $\varepsilon: U\to (0,\infty)$ a continuous function. Then there exists a $C^1$ function $g:U\to \R^d$ such that
\begin{enumerate}
\item $||f(x)-g(x)||\leq \varepsilon (x)$ for every $x\in U$.
\item $||Df(x)-Dg(x)||\leq\varepsilon(x)$ for every $x\in U$.
\item $C_g=\emptyset$, i.e. $g$ has no critical points.
\end{enumerate}
\end{thm}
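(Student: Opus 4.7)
The plan is to execute two steps. First, apply the $C^1$-fine approximation results of \cite{MoulisApproximation, AFG-GJL} to replace $f$ by a $C^1$ map $\tilde f:U\to\R^d$ with $\|f-\tilde f\|\leq\varepsilon/2$ and $\|Df-D\tilde f\|\leq\varepsilon/2$ on $U$, and such that $\tilde f$ \emph{locally depends on finitely many coordinates}: for every $x\in U$ there are a neighbourhood $V_x$ and a finite set $S(x)\subset\N$ such that $\tilde f(y)$ depends only on $(e^*_i(y))_{i\in S(x)}$ for $y\in V_x$. Second, I would perturb $\tilde f$ by a small, locally finite sum of ``rank-$d$ blocks'' built from basis coordinates that $\tilde f$ ignores locally, producing a $g$ which is $C^1$-finely within $\varepsilon/2$ of $\tilde f$ and has $Dg(x)$ surjective everywhere.

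For the second step, refine the above covering to a locally finite open cover $\{V_n\}_{n\in\N}$ of $U$ and construct, using the given $C^1$ norm, a $C^1$ partition of unity $\{\psi_n\}$ subordinate to it. After shrinking the $V_n$'s if necessary, I may assume that for every $n$ there is a finite $S_n\subset\N$ such that the norm, $\psi_n$ and $\tilde f$ all depend only on $\{e^*_i\}_{i\in S_n}$ on a neighbourhood of $V_n$. Using local finiteness of $\{V_n\}$ together with the finiteness of each $S_m$, inductively select pairwise disjoint blocks $B_n=\{m^n_1,\dots,m^n_d\}\subset\N$ with
$$
B_n\cap\bigcup_{m\,:\,V_m\cap V_n\neq\emptyset}S_m=\emptyset.
$$
Fix centres $x_n\in V_n$, let $u_1,\dots,u_d$ denote the standard basis of $\R^d$, and choose numbers $\delta_n>0$ tending to zero sufficiently fast. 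Define
$$
g(x):=\tilde f(x)+\sum_{n\in\N}\psi_n(x)\,\delta_n\sum_{k=1}^{d}e^*_{m^n_k}(x-x_n)\,u_k.
$$
The sum is locally finite, so $g$ is of class $C^1$; taking the $\delta_n$ small enough (using that $|e^*_{m^n_k}(x-x_n)|$ is bounded on $\mathrm{supp}\,\psi_n\subset V_n$ and that only finitely many terms are nonzero at each $x$) secures (1) and (2).

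For (3), fix $x\in U$ and pick any $j_0$ with $\psi_{j_0}(x)>0$. Set $v_l:=e_{m^{j_0}_l}$ for $l=1,\dots,d$. For every $n$ with $x\in\mathrm{supp}\,\psi_n$ one has $V_n\cap V_{j_0}\neq\emptyset$, hence $m^{j_0}_l\notin S_n$; the finite-coordinate dependence then yields $D\tilde f(x)(v_l)=0$ and $D\psi_n(x)(v_l)=0$, while pairwise disjointness of the $B_n$ gives $e^*_{m^n_k}(v_l)=\delta_{n,j_0}\,\delta_{k,l}$. A direct differentiation of the defining formula collapses $Dg(x)(v_l)$ to $\psi_{j_0}(x)\,\delta_{j_0}\,u_l$, so that $\{Dg(x)(v_l)\}_{l=1}^{d}$ spans $\R^d$ and $Dg(x)$ is surjective.

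The main obstacle I expect is the combinatorial bookkeeping in the second step: producing a refinement fine enough, and a partition of unity smooth enough, that a single finite set $S_n$ simultaneously controls the norm, the bump $\psi_n$ and the approximation $\tilde f$ on each piece $V_n$, and then carrying out the inductive selection of pairwise disjoint blocks $B_n$ that avoid finitely many prohibited indices at every stage. The finite-coordinate dependence of $\tilde f$ delivered by \cite{MoulisApproximation, AFG-GJL} is exactly what makes the crucial cancellation $D\tilde f(x)(v_l)=0$ available; without it the perturbation scheme would not close up.
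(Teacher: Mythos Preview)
Your two-step scheme --- first produce a $C^1$-fine approximation $\tilde f$ with some kind of locally finite dependence, then add a small linear perturbation carried by a partition of unity to force surjectivity of the derivative --- is structurally the same as the paper's argument; there the two steps are fused into the single formula~\eqref{approximating function-c0}. There is, however, a genuine gap in your execution.

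The problem is the sentence ``I may assume that for every $n$ there is a finite $S_n\subset\N$ such that the norm, $\psi_n$ and $\tilde f$ all depend only on $\{e^*_i\}_{i\in S_n}$ on a neighbourhood of $V_n$.'' Neither the hypothesis of the theorem nor the output of \cite{MoulisApproximation,AFG-GJL} gives you this. In the paper, ``locally depends on finitely many coordinates'' means dependence on finitely many \emph{arbitrary} functionals $L_1,\dots,L_l\in E^*$, not on coordinate functionals $e^*_i$; and the $C^1$-fine approximation built in \cite{AFG-GJL} (equivalently Lemma~\ref{lema 5 from Az et al} and formula~\eqref{approximating function-c0} here) inevitably involves the functionals $Df^k(x^j)$ as well as the norm-dependence functionals $L_{j(\cdot)}$, none of which are coordinate functionals in general. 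Hence there is no reason for a basis vector $e_m$ to lie in the common kernel of those functionals, and your crucial cancellations $D\tilde f(x)(e_{m^{j_0}_l})=0$ and $D\psi_n(x)(e_{m^{j_0}_l})=0$ are unjustified.

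The paper confronts precisely this difficulty: instead of coordinate functionals it chooses each perturbation component $T^i_j\in E^*$ \emph{outside} the countable-dimensional span
\[
\operatorname{span}\bigl\{e^*_n,\ Df^k(x^n),\ L_{n(1)},\dots,L_{n(l_n)},\ T^k_{j'}\ (\text{previously chosen})\ :\ n\in\N,\ 1\le k\le d\bigr\},
\]
which is possible because $E^*$ cannot have a countable Hamel basis. The test vector $v$ giving $Dg(y)(v)=(t_1,\dots,t_d)$ is then found in the intersection of the kernels of the finitely many functionals relevant at $y$, not as a basis vector. Your perturbation $\sum_n\psi_n\,\delta_n\sum_k e^*_{m^n_k}(\cdot-x_n)\,u_k$ would go through only under the extra, unstated assumption that the norm and $\tilde f$ depend on finitely many \emph{coordinate} functionals --- and even then you would still have to replace the first-order data $Df(x^j)$ appearing in the \cite{AFG-GJL} construction by finite-coordinate truncations $F_j$, exactly as the paper does in its $\ell_p$ proof.
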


\begin{thm}\label{approximation of f by another one with extractible critical set--lp}
Let $E$ be an infinite-dimensional Banach space with a $C^1$ strictly convex equivalent norm and with a $1$-suppression unconditional basis 
(in particular $K_u$-unconditional with $1\leq K_u\leq 2$). Let $U$ be an open subset of $E$, $f:U\to \R^d$ a $C^1$ function and $\varepsilon: U\to (0,\infty)$ a continuous function. Then there exists a $C^1$ function $g:U\to \R^d$ such that:
\begin{enumerate}
\item $|f(x)-g(x)|\leq \varepsilon (x)$ for every $x\in U$.
\item $||Df(x)-Dg(x)||\leq\varepsilon(x)$ for every $x\in U$.
\item $C_g$ is locally contained in a finite union of complemented subspaces of infinite codimension in $E$.
\end{enumerate}
\end{thm}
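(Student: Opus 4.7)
My plan follows the same two-step architecture as Theorem \ref{approximation of f by another one with extractible critical set--c0} (a $C^1$-fine approximation of $f$ followed by a tail-coordinate perturbation), but with a weaker conclusion: the critical set $C_g$ will merely be locally contained in a finite union of complemented infinite-codimension subspaces rather than empty. The reason for settling for this is that the norm on $E$ is no longer assumed to depend locally on finitely many coordinates, so the partition-of-unity gluing will inevitably create residual critical points that cannot be cancelled. Fortunately, Proposition \ref{extractibility of a finite union of complemented subspaces} is tailored exactly to this weaker conclusion and handles the diffeomorphic extraction in the subsequent step of the proof of Theorem \ref{technical version_lp}.

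First, I would invoke the $C^1$-fine approximation of Moulis \cite{MoulisApproximation} and AFG-GJL \cite{AFG-GJL} to obtain a $C^1$ map $\tilde g:U\to\R^d$ with $\|f(x)-\tilde g(x)\|\le\varepsilon(x)/3$ and $\|Df(x)-D\tilde g(x)\|\le\varepsilon(x)/3$, together with a locally finite open cover $\{U_i\}_{i\in\N}$ of $U$, integers $N_i$, and a $C^1$ partition of unity $\{\varphi_i\}$ subordinate to $\{U_i\}$, chosen so that on each $U_i$ the map $\tilde g$ is $C^1$-close to its finite-dimensional model $\tilde g\circ P_{N_i}$. Strict convexity of the norm together with the $1$-suppression unconditional basis enter precisely at this step, as they are what underpin such an approximation for the spaces of the hypothesis.

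Next, I would introduce a carefully designed tail perturbation. Fix a bounded $C^1$ function $\psi:\R\to\R$ with bounded derivative whose only critical point is $0$, and let $v_1,\dots,v_d$ denote the standard basis of $\R^d$. For each $i$, partition the tail index set $\{N_i+1,N_i+2,\dots\}$ into $d$ pairwise disjoint infinite subsets $A_{i,1},\dots,A_{i,d}$ (also globally disjoint across distinct $i$), choose a sufficiently small positive sequence $(\delta_{i,j})_j$, and set
\[
\eta_i(x)=\sum_{l=1}^d\sum_{j\in A_{i,l}}\delta_{i,j}^{2}\,\psi\!\bigl(x_j/\delta_{i,j}\bigr)\,v_l,\qquad g(x)=\tilde g(x)+\sum_i\varphi_i(x)\eta_i(x).
\]
The $1$-suppression property controls $\|\eta_i\|$ and $\|D\eta_i\|$ on $U_i$ in terms of the $\delta_{i,j}$'s, so by shrinking these after the cover is fixed we can arrange $\|g-\tilde g\|\le\varepsilon/3$ and $\|Dg-D\tilde g\|\le\varepsilon/3$. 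To analyze $C_g$, at $x_0\in C_g$ only finitely many indices $i\in I(x_0)$ contribute; if the image of $Dg(x_0)$ misses a direction $v_l\in\R^d$, evaluating its $v_l$-component against $e_j$ for $j\in A_{i,l}$ forces $\psi'(x_j/\delta_{i,j})$ to be essentially zero for every $j$ in the infinite set $A_{i,l}$. Since $\psi'$ vanishes only at $0$ and is bounded away from $0$ outside any neighborhood of it, this confines $x_0$ to a subspace of the form $\{x\in E:x_j=0\text{ for }j\in A\}$ with $A$ infinite, which is complemented (by $1$-suppression) and of infinite codimension; ranging over $l\in\{1,\dots,d\}$ and over the finitely many possible $I(x_0)$ in a small neighborhood of $x_0$ yields the desired local finite union.

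The main obstacle I expect is the interaction between the tail perturbation and the rank-one gluing contributions $\eta_i(x_0)\otimes D\varphi_i(x_0)$ coming from differentiating the partition of unity: a priori these could supply the otherwise missing direction $v_l$ and defeat the confinement argument above. Overcoming this requires a quantitative balance between $\|\eta_i\|$ and $\|D\varphi_i\|$, using the freedom to shrink the $\delta_{i,j}$'s \emph{after} the cover is fixed, and exploiting the $1$-suppression property to keep the norm estimates stable when finitely many ``bad'' coordinates (those where $D\varphi_i$ or $D\tilde g$ acts nontrivially in the $v_l$ direction) are discarded from the analysis.
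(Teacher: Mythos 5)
Your proposal diverges from the paper's approach in a fundamental way — you replace the paper's \emph{linear} tail perturbations $T_j(x-x^j)$ by nonlinear coordinate-wise bumps $\delta_{i,j}^2\psi(x_j/\delta_{i,j})$ — and unfortunately this substitution introduces a gap that I do not see how to close.

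The central difficulty is that the ``confinement'' you describe does not produce a \emph{linear} subspace. If $Dg(x_0)$ misses a direction, there is a nonzero $\lambda\in\R^d$ with $\lambda\cdot Dg(x_0)=0$ in $E^*$, and evaluating at $e_j$ for $j\in A_{i,l}$ gives an identity of the form
\begin{equation*}
c_j+\mu_j+\varphi_i(x_0)\lambda_l\,\delta_{i,j}\,\psi'(x_j/\delta_{i,j})=0,
\end{equation*}
where $c_j=\lambda\cdot D\tilde g(x_0)(e_j)$ and $\mu_j$ collects the rank-one terms $D\varphi_{i'}(x_0)(e_j)\,[\lambda\cdot\eta_{i'}(x_0)]$. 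This forces $\psi'(x_j/\delta_{i,j})$ to take a specific \emph{nonzero} value depending on $x_0$, not to vanish; hence $x_j$ is confined to a level set of $\psi'$, not to $\{x_j=0\}$. The resulting set is a nonlinear manifold of infinite codimension, not a complemented subspace, and Proposition \ref{extractibility of a finite union of complemented subspaces} cannot be applied to it. You propose to ``discard finitely many bad coordinates'' where $D\tilde g$ or $D\varphi_i$ act nontrivially — but in the $\ell_p$ setting, where the norm does \emph{not} locally depend on finitely many coordinates, the derivatives of the norm (hence of the partition functions and of the pieces from the Moulis/AFG-GJL construction) have \emph{infinitely many} nonzero coordinates, so this reduction is unavailable. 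Indeed this is precisely the obstacle that distinguishes Theorem \ref{approximation of f by another one with extractible critical set--lp} from Theorem \ref{approximation of f by another one with extractible critical set--c0}. There is also an internal inconsistency in your choice of $\psi$: a bounded $C^1$ function with bounded derivative cannot have $\psi'$ bounded away from $0$ off a neighbourhood of $0$, since that would force $\psi$ to grow linearly.

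Finally, you misattribute the role of strict convexity: it is not needed to run the $C^1$-fine approximation (which only uses the $C^1$ norm and the unconditional basis, as in Lemma \ref{lema 5 from Az et al}), but it is exactly what drives the critical-set analysis via Fact \ref{fact about suppression basis}: $w_{j_0}\neq 0\Rightarrow D\|\cdot\|(w)(e_{j_0})\neq 0$. The paper exploits this by choosing the correcting vector $v$ inside $\bigcap_{a\in A}\Ker D\|\cdot\|(a)\cap E^i_{(m,\tilde n)}$ (a finite-codimension intersection where all the norm-derivative and partition-of-unity terms vanish \emph{exactly}), and by taking $T^i_m\in\overline{\espan}\{e^*_n:n\in I^i_m\}\setminus\espan\{e^*_n:n\in I^i_m\}$ so that $T^i_m$ cannot be annihilated on that subspace. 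This yields an \emph{exact} solution of $Dg(y)(v)=t$, rather than an approximate cancellation — which is the crucial point your bump-perturbation scheme cannot replicate.
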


The proofs of these results appear in Subsections \ref{3.1} and \ref{3.2} respectively, following the ideas of the papers \cite{MoulisApproximation,AFG-GJL}.

\medskip

However, we must previously introduce an important result that is an easier and slightly different version of \cite[Lemma 5]{AFG-GJL}. The proof will mainly be the same but here we want also to study the structure of the critical set of points of the approximating function and we do not care if the approximating function has more regularity than the initial function. If the given function is 
$C^1$, it is enough for the approximating function to be 
$C^1$ as well.

For the readers convenience we present a self-contained proof, even though the arguments are the same as in \cite{MoulisApproximation,AFG-GJL}.

\begin{lem}\label{lema 5 from Az et al}
Let $E$ and $F$ be a Banach spaces. Suppose that $E$ is infinite-dimensional and has a $K_u$-unconditional basis and a $C^1$ equivalent norm. Take an open set $U$ of $E$. For every open ball $B_0=B(z_0,r_0)$ with $B(z_0,2r_0)\subseteq U$, and for every $C^1$ function $f_1:U\to F$ and numbers $\varepsilon,\eta>0$ with $\sup_{x\in B(z_0,2r_0)}||Df_1(x)||<\eta$, there exists a $C^1$ function $\Psi:E\to E$ such that for $f_2:=f_1\circ \Psi$, we have
\begin{enumerate}
\item $\sup_{x\in B_0}||f_1(x)-f_2(x)||<\varepsilon$.
\item $\sup_{x\in B_0}||Df_2(x)||<(K_u)^2 8 \eta$.
\item For every $x\in E$ there exists $n_0\in\N$ and a neighbourhood $V_0$ of $x$ such that 
$$D\Psi(y)(v)=\sum^{n_0}_{n=1}\left[ a_n(y)D||\cdot||(y-P_{n-1}(y))(v-P_{n-1}(v))y_n+\xi_n(y)v_n\right]e_n $$
for every $v=\sum^{\infty}_{n=1}v_ne_n\in E$ and $y\in V_0$, where $\xi_n,a_n:V_0\to\R$ are $C^1$ functions. 
\end{enumerate}
\end{lem}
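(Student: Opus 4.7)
The plan is to construct $\Psi$ as a coordinate-wise modification of the identity, following the approach of Moulis and of \cite[Lemma 5]{AFG-GJL}. I would seek $\Psi$ in the form
\[
\Psi(y) \;=\; \sum_{n=1}^\infty \theta_n\!\bigl(\|y - P_{n-1}(y)\|\bigr)\, y_n\, e_n,
\]
where $y_n = e_n^{*}(y)$ and the $\theta_n : [0,\infty) \to [0,1]$ are $C^1$ cutoff functions chosen to satisfy two opposing requirements: near every $x\in E$ only finitely many of the $\psi_n := \theta_n(g_n(\cdot))\cdot(\cdot)_n$ may be non-identically-zero (so that $D\Psi(y)$ collapses to the finite sum claimed in (3)); and on $B_0$ the map $\Psi$ must stay close enough to the identity that (1) holds. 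Here I write $g_n(y) := \|y-P_{n-1}(y)\|$.

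Verifying the form of $D\Psi$ is then a direct computation. Since $g_n = \|\cdot\|\circ(I - P_{n-1})$, the chain rule gives
\[
Dg_n(y)(v) \;=\; D\|\cdot\|(y - P_{n-1}(y))(v - P_{n-1}(v)),
\]
after which the product rule applied to $\psi_n(y) = \theta_n(g_n(y))\,y_n$ yields
\[
D\psi_n(y)(v) \;=\; \theta_n'(g_n(y))\, D\|\cdot\|(y - P_{n-1}(y))(v - P_{n-1}(v))\, y_n \;+\; \theta_n(g_n(y))\, v_n,
\]
i.e.\ exactly the summand in (3) with $\xi_n = \theta_n\circ g_n$ and $a_n = \theta_n'\circ g_n$. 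Bound (2) then follows by the chain rule $\|Df_2(y)\|\le \|Df_1(\Psi(y))\|\,\|D\Psi(y)\|$ once $\Psi(B_0)\subseteq B(z_0,2r_0)$ is verified, together with the operator-norm estimate $\|D\Psi\|\le 8K_u^2$ obtained from the summand formula by using $\|P_{n-1}\|\le K_u$, $|y_n|\,\|e_n\| \le 2K_u\|y\|$, and boundedness of the norm's derivative on bounded sets away from the origin (the factor $K_u^2$ reflecting one use of unconditionality in combining the individual summands into a series estimate and one use inside $(v-P_{n-1}(v))$). Property (1) is then obtained from the mean value inequality $\|f_1(\Psi(y))-f_1(y)\|\le \eta\,\|\Psi(y)-y\|$ combined with the bound $\|Df_1\|<\eta$ on $B(z_0,2r_0)$, provided the cutoffs are such that $\|\Psi(y)-y\|<\varepsilon/\eta$ on $B_0$.

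The main obstacle, and the heart of the technical construction, is the inductive choice of the $\theta_n$ reconciling the two opposing requirements placed on them. Since $g_n(y)\to 0$ as $n\to\infty$ for every fixed $y$, any choice of cutoff that vanishes near $0$ (needed to secure the local finiteness of (3)) also annihilates $\psi_n$ for all large $n$ on $B_0$, a priori destroying the approximation (1). Overcoming this requires a careful level-dependent design: one exploits the pointwise convergence $g_n(y)\to 0$ together with the $K_u$-uniform bound on the projections to place the vanishing plateau of $\theta_n$ below a scale that $g_n$ does not reach on $B_0$ until $n$ is large enough that the remaining tail of $y$ beyond index $n$ is already controlled by $\varepsilon/\eta$. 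This bookkeeping is precisely the one carried out in \cite[Lemma~5]{AFG-GJL}, and I would reproduce it with the simplification that here the cutoffs need only be $C^1$ rather than $C^p$, and that we do not need to propagate any higher-order smoothness through the composition $f_1\circ \Psi$.
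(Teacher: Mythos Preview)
Your approach is the paper's approach, and the formula you derive for $D\Psi$ together with the chain-rule deductions of (1) and (2) are correct. Where you go astray is in the last paragraph: there is no ``inductive choice of the $\theta_n$'' and no ``level-dependent design''. A \emph{single} cutoff $\theta(t)=1-\varphi(t/r)$ at a \emph{single} scale $r<\min\{\varepsilon/(K_u\eta),\, r_0/K_u\}$, the same for every $n$, already does the job. Your worry that ``annihilating $\psi_n$ for all large $n$ destroys (1)'' dissolves once you notice that the very condition triggering the annihilation, namely $g_n(y)=\|y-P_{n-1}(y)\|<r$, \emph{is} the statement that the tail of $y$ past index $n-1$ has norm below $r$; hence $\|\Psi(y)-y\|=\|\sum_{n\ge n_1}(1-\xi_n(y))y_n e_n\|\le K_u\,r<\varepsilon/\eta$, where $n_1$ is the first index with $g_{n_1}(y)\le r$. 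The scale of the cutoff and the tail control are the same quantity, not two things to be reconciled.

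The same observation is what makes the bound in (2) work, and your sketch there misses it. The derivative of the cutoff contributes a factor $|\theta'|\le 3/r$, which looks bad; but $\xi_n'(y)\neq 0$ forces $\|y-P_{n-1}(y)\|\le r$, so the factor $\|\sum_{n\ge n_1}x_ne_n\|\le r$ cancels the $r^{-1}$, leaving $\|D\Psi\|\le 3K_u(1+K_u)+K_u<8K_u^2$ independent of $r$. Your proposed route via $|y_n|\,\|e_n\|\le 2K_u\|y\|$ would not give a uniform bound, since it does not exploit this cancellation.
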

\begin{proof}
Choose $0<r<\min \{{\varepsilon\over K_u\eta},{r_0\over K_u}\}$. Let $\varphi:\R\to [0,1]$ be a $C^{\infty}$ smooth function such that $\varphi(t)=1$ if $|t|<{1\over 2}$, $\varphi(t)=0$ if $|t|>1$ and $\varphi'(\R)\subseteq [-3,0]$.

For every $n\in\N$ we define the functions $\xi_n:E\to\R$ and $\Psi:E\to E$,
\begin{align*}
\xi_n(x)&=1-\varphi\left( {||x-P_{n-1}(x)||\over r}\right),\\
\Psi(x)&=\sum^{\infty}_{n=1}\xi_n(x)x_ne_n,
\end{align*}
where $x=\sum^{\infty}_{n=1}x_ne_n\in E$. We denote by $P_{0}$ the zero operator.

\begin{fact}\label{fact about function Psi}
The mapping $\Psi:E\to \espan \{e_n:\,n\in\N\}$ is well-defined, $C^{1}$ smooth on $E$, and has the following properties:
\begin{enumerate}
\item $||\Psi'(x)||\leq (K_u)^2 8$ for all $x\in E$;
\item $||x-\Psi(x)||\leq K_ur$ for all $x\in E$;
\item $\Psi (B_0)\subseteq B(z_0,2r_0)$.
\end{enumerate}
\end{fact}
\begin{proof} 
For any $x\in E$, because $P_n(x)\to x$ and the $||P_n||$ are uniformly bounded, there exists a neighbourhood $V_0$ of $x$ and an $n_0\in\N$ such that $\xi_n(y)=0$ for all $y\in V_0$ and $n> n_0$, and so $\Psi(V_0)\subset \espan \{e_1,\dots,e_{n_0}\}$. Thus $\Psi:E\to \bigcup^{\infty}_{n=1} \espan \{e_1,\dots,e_n\}$ is a well-defined $C^{1}$ smooth map. We next compute and estimate its derivative.

We have that
$$(\xi_n(y)y_n)'=\xi'_n(y)y_n+\xi_n(y)e^{*}_{n}.$$

If $v\in E$ and $y\in V_0$
\begin{align*}
\xi'_n(y)(v)&=-\varphi'\left( {||y-P_{n-1}(y)||\over r}\right)D||\cdot||(y-P_{n-1}(y))(v-P_{n-1}(v))r^{-1}=\\
&=a_n(y)D||\cdot||(y-P_{n-1}(y))(v-P_{n-1}(v)),
\end{align*}
where $a_n:E\to \R$ are $C^{1}$ functions, defined by $a_n(y)=-\varphi'\left( {||y-P_{n-1}(y)||\over r}\right)r^{-1}$.

Looking at the expression of $\Psi$ we compute its derivative for every $y\in V_0$,
\begin{align*}
D\Psi(y)(v)&=\sum^{n_0}_{n=1}\left[ \xi'_n(y)(v)y_n+\xi_n(y)v_n\right] e_n=\\
&=\sum^{n_0}_{n=1}\left[ a_n(y)D||\cdot||(y-P_{n-1}(y))(v-P_{n-1}(v))y_n + \xi_n(y)v_n\right] e_n.
\end{align*}

Observe that we have proved $(3)$ of Lemma \ref{lema 5 from Az et al}.

Now since $|\varphi'(t)|\leq 3$, $||(I-P_{n-1})'(y)||\leq 1+K_u$ and the derivative of the norm always has norm one, for all $y$ and all $n$ we get that
$$||\xi'_n(y)||\leq \left| \varphi'\left( {||y-P_{n-1}(y)||\over r}\right)  \right|r^{-1}||(I-P_{n-1})'(y)||\leq 3(1+K_u)r^{-1}. $$

For a fixed $x$, define $n_1=n_1(x)$ to be the smallest integer with $||x-P_{n_1-1}(x)||\leq r$. Then for any $m<n_1$, $\xi_m(x)=1$ and $\xi'_m(x)=0$, and so, for every $v\in B(0,1)$, 
\begin{align*}
||D\Psi(x)(v)||&\leq ||\sum^{\infty}_{n=n_1}\xi'_n(x)(v)x_ne_n||+||\sum^{\infty}_{n=1}\xi_n(x)v_ne_n||\leq\\
&\leq K_u\sup_{n_1\leq n}|\xi'_n(x)(v)| \,||\sum^{\infty}_{n=n_1}x_ne_n||+K_u\sup_{n}|\xi_n(x)|\,||\sum^{\infty}_{n=1}v_ne_n||\leq \\
&\leq 3K_u(1+K_u)r^{-1}||\sum^{\infty}_{n=n_1}x_ne_n||+K_u\leq 4K_u+3(K_u)^2 <8(K_u)^2,
\end{align*}
proving $(1)$.

We next estimate $||x-\Psi(x)||$.
$$||x-\Psi(x)||= ||\sum_{n\geq n_1}x_n(1-\xi_n(x))e_n||\leq K_u\sup_n |1-\xi_n(x)|\,||\sum_{n\geq n_1}x_ne_n||\leq K_ur\leq r_0,
$$
which proves $(2)$. Lastly, property $(3)$ is immediate from $(2)$ and the choice of $r$.
\end{proof}

Going back to the proof of Lemma \ref{lema 5 from Az et al} define
$$f_2(x):=f_1(\Psi(x)),$$
which is a $C^1$ function.
Firstly we have that for every $x\in B_0$,
$$||f_1(x)-f_2(x)||\leq \eta ||x-\Psi(x)||\leq \eta K_ur<\varepsilon,$$
using the Lipschitzness of $f_1$ in $B(z_0,2r_0)$.

Secondly for every $x\in B_0$,
$$||Df_2(x)||\leq ||Df_1(\Psi(x))||\,||D\Psi(x)||\leq \eta (K_u)^2 8 . $$
The proof of the Lemma is now complete.

\end{proof}

\subsection{Proof of Theorem \ref{approximation of f by another one with extractible critical set--c0}}\label{3.1} 

\begin{proof}[Proof of Theorem \ref{approximation of f by another one with extractible critical set--c0}]

Using the openness of $U$, the continuity of $\varepsilon$ and $f'$, the separability of $E$ and the assumption that the norm $||\cdot||$ locally depends on finitely many coordinates, we find a covering
$$\bigcup_{j=1}B(x^j,r_j)=U$$
of $U$ such that
\begin{enumerate}
\item [(i)] $B(x^j,4r_j)\subset U$  with $r_j\leq 1$ for every $j\in\N$.
\item [(ii)]$\varepsilon(x)\geq {\varepsilon(x^j)\over 2}$ for every $x\in B(x^j,2r_j)$.
\item [(iii)]$||Df(x)-Df(x^j)||\leq {\varepsilon(x^j)\over (K_u)^2 72}$ for every $x\in B(x^j,4r_j)$. 
\item  [(iv)] For every $j\in\N$ there exist  a number $l_j\in\N$, some linear functionals $L_{j(1)},\dots,L_{j(l_j)}$, and a $C^{1}$ function $\gamma_j:\R^{l_j}\to\R$ such that 

$$||y||=\gamma_j(L_{j(1)}(y),\dots,L_{j(l_j)}(y))$$
for every $y\in B(x^j,2r_j)$.

\end{enumerate}

Now for every $j\in \N$ choose functions $\varphi_j\in C^{1}(E;[0,1])$ with bounded derivative so that $\varphi_j(x)=1$ for $x\in B(x^j,r_j)$ and $\varphi_j(x)=0$ for $x\notin B(x^j,2r_j)$. We precisely take $\varphi_j(x)=\theta_j(||x-x^j||)$ where $\theta_j:\R\to [0,1]$ is $C^\infty$ and $\theta^{-1}_{j}(1)=(-\infty,r_j]$ and $\theta^{-1}_{j}(0)=[2r_j,\infty)$. It must be noted here that despite the fact that the norm $||\cdot||$ is not differentiable at the origin, the functions $\varphi_j$ are $C^1$ for every $x\in E$ because in a neighbourhood of $x^j$ they are constantly one.

We introduce the following constants,  
\begin{align*}
\tilde{M}_k&=\sup_{x\in B(x_k,2r_k)}||\varphi'_k(x)||, \\
M_j&=\max\{1,\sum^{j}_{k=1}\tilde{M}_k\}.
\end{align*}

Next define for every $j\in\N$,
$$h_j=\varphi_j\prod_{k<j}(1-\varphi_k).$$
One can easily check that we have the following properties:
\begin{itemize}
\item For every $x\in U$ there exists $n_x=\min \{m\in \N:\,x\in B(x_m,r_m)\}$ such that $1-\varphi_{n_x}(x)=0$ and hence $h_m(y)=0$ for every $m>n_x$ and $y\in B(x_{n_x},r_{n_x})$.
\item $\sum^{\infty}_{j=1}h_j(x)=1$ for every $x\in U$.
\item $||h'_j(x)||\leq M_j$ for every $j\in\N$ and $x\in B(x^j,2r_j)$.
\end{itemize}
In particular $\{h_j\}_{j\in\N}$ is a $C^1$ partition of unity which is subordinate to $\{B(x^j,2r_j)\}_{j\in\N}$.
\medskip

For every $j\in\N$ we apply the previous Lemma \ref{lema 5 from Az et al} for each ball $B(x^j,2r_j)$, the function $f_1(x)=f(x^j)+ Df(x^j)(x-x^j)-f(x)$ and the constants ${\varepsilon(x^j)\over 2^{j+3}M_j}$ and ${\varepsilon(x^j)\over  (K_u)^2 72}$ for $\varepsilon$ and $\eta$ respectively. Note that we can apply the Lemma \ref{lema 5 from Az et al} because
$$\sup_{x\in B(x^j,4r_j)}||Df_1(x)||=\sup_{x\in B(x^j,4r_j)}||Df(x^j)-Df(x)||\leq {\varepsilon(x^j)\over (K_u)^2 72}.$$

The resulting functions from the proof of the lemma will be called  $\delta_j=f_1\circ \Psi_j$.  In particular we have
\begin{equation}\label{estimation from Lemma 3.3}
||f(x^j)+Df(x^j)(x-x^j)-\delta_j(x)-f(x)||\leq {\varepsilon(x^j)\over 2^{j+3}M_j}
\end{equation}
and
\begin{equation}\label{norm of delta_j}
||D\delta_j(x)||\leq 8{\varepsilon(x^j)\over 72}.
\end{equation}
for every $x\in B(x^j,2r_j)$.

\medskip

Let us define finally
\begin{equation}\label{approximating function-c0}
g(x):=\sum^{\infty}_{j=1}h_j(x)(f(x^j)+Df(x^j)(x-x^j)-\delta_j(x)+T_j(x-x^j)),
\end{equation}
where $T_j:E\to\R^d$ is a continuous linear surjective operator which we next construct. Define $T_j=(T^{1}_{j},\dots,T^{d}_{j})$ inductively such that for each $i=1,\dots,d$, $T^{i}_{j}$ is a non-null element of $E^{*}$ satisfying that
$$
 T^{i}_{j}\notin \espan \{e^{*}_{n},Df^k(x^n),L_{n(1)},\dots,L_{n(l_n)},T^{k}_{1},\dots,T^{k}_{j-1},T^{1}_{j},\dots,T^{i-1}_{j}:\, n\in \N,\,1\leq k\leq d\}
$$
(note that it is the span, not the closed span); which can never fill the whole space $E^*$ because Banach spaces of infinite dimension can not have a countable Hamel basis. We also impose that their norms are small enough, more precisely,
\begin{equation}\label{norm of T_j}
||T_j||\leq\varepsilon(x^j)M^{-1}_{j}2^{-j-4}\leq{\varepsilon(x^j)\over 8}.
\end{equation}
An important property that derives from this definition of $T_j$ is that the set $\{T^{1}_{j},\dots, T^{d}_{j}\}$ is linearly independent and hence $T_j:E\to \R^d$ is a surjective linear operator. We also have that 
$$T^{i}_{j}\notin \espan \{e^{*}_{n},Df^k(x^n),L_{n(1)},\dots,L_{n(l_n)},T^{k}_{1},\dots,T^{k}_{j-1},T^{p}_{j}:\, n\in \N,\,1\leq k\leq d,\,1\leq p\leq d,p\neq i\}.$$

\medskip

Using the expression \eqref{approximating function-c0} let us  check that properties $(1),(2)$ and $(3)$ of the statement of the main theorem are satisfied for this choice of $T^{i}_{j}$.

Firstly if $h_j(x)\neq 0$, then $x\in B(x^j,2r_j)$ and
\begin{align*}
||f(x^j)+Df(x^j)(x-x^j)-&\delta_j(x)+T_j(x-x^j)-f(x)||\leq\\
&\leq ||f(x^j)+Df(x^j)(x-x^j)-\delta_j(x)-f(x)||+||T_j(x-x^j)||)\leq\\
&\leq {\varepsilon(x^j)\over 2^{j+3}M_j}+{\varepsilon(x^j)2r_j\over 8} \leq {\varepsilon(x^j)\over 2}\leq\varepsilon(x). 
\end{align*}

Therefore for every $x\in U$,
$$||g(x)-f(x)||=||\sum^{\infty}_{j=1}h_j(x)(f(x^j)+Df(x^j)(x-x^j)-\delta_j(x)+T_j(x-x^j)-f(x))||\leq   \varepsilon(x)\sum^{n}_{j=1}h_j(x)=\varepsilon(x).
$$
We have proved $(1)$.

In order to show $(2)$ and $3)$, let us analyze what the derivative of $g$ looks like, and inspect its critical set.
\begin{claim}\label{good neighbourhood for x-c0}
For every $x\in U$ there exist $n,k_1,\dots,k_n\in\N$ and a neighbourhood $V_x=V\subset B(x^n,r_n)$ of $x$ such that:
\begin{enumerate}
\item [(i)] For every $y\in B(x^n,r_n)$,
\begin{equation}\label{formula for g-c0}
g(y):=\sum^{n}_{j=1}h_j(y)(f(x^j)+Df(x^j)(y-x^j)-\delta_j(y)+T_j(y-x^j)),\;\;\text{and}
\end{equation}
\begin{equation}\label{formula for g'-c0}
Dg(y)=\sum^{n}_{j=1}h'_j(y)\left[ f(x^j)+Df(x^j)(y-x^j)-\delta_j(y)+T_j(y-x^j)\right] +\sum^{n}_{j=1}h_j(y)\left[  Df(x^j)-D\delta_j(y)+T_j\right].
\end{equation}

\item [(ii)] For every $y\in V$ and $1\leq j\leq n$,  $D\delta_j(y)(v)=Df(\Psi_j(y))\circ ( D\Psi_j(y)(v))$ has the form
\begin{equation}\label{expresssion for the derivative of delta-c0}
Df(\Psi_j(y))\circ \left\lbrace\sum^{k_j}_{n=1}\left[ a^{j}_{n}(y)D||\cdot||(y-P_{n-1}(y))(v-P_{n-1}(v))y_n+\xi^{j}_{n}(y)v_n\right]e_n\right\rbrace .
\end{equation}
\end{enumerate}
\end{claim}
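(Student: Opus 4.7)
The plan is to exploit two layers of local finiteness: first, the partition of unity $\{h_j\}_{j\in\N}$ is pointwise finite on $U$, so the series defining $g$ in \eqref{approximating function-c0} collapses to a finite sum on a small ball around $x$; second, part $(3)$ of Lemma \ref{lema 5 from Az et al} tells us that each $\Psi_j$ has a derivative of the stated form on some neighbourhood of every point, so after intersecting finitely many such neighbourhoods we will obtain $V$.

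For item $(i)$ I would define $n := n_x = \min\{m\in\N : x\in B(x^m, r_m)\}$, which exists because $\{B(x^j, r_j)\}_{j\in\N}$ covers $U$. By the first bulleted property of $\{h_j\}$ recorded just after the partition of unity was constructed, $h_m \equiv 0$ on $B(x^n, r_n)$ for every $m>n$, since $\varphi_n \equiv 1$ on that ball. Therefore on $B(x^n, r_n)$ the infinite sum \eqref{approximating function-c0} reduces to the finite sum \eqref{formula for g-c0}, and a termwise differentiation of this finite sum immediately yields \eqref{formula for g'-c0}.

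For item $(ii)$ I would apply part $(3)$ of Lemma \ref{lema 5 from Az et al} to each $\Psi_j$ with $1\leq j\leq n$. For each such $j$ this supplies an open neighbourhood $V_j$ of $x$ and an integer $k_j\in\N$, together with $C^1$ functions $\xi^{j}_{m}$ and $a^{j}_{m}$ on $V_j$, such that
$$D\Psi_j(y)(v) = \sum_{m=1}^{k_j}\left[ a^{j}_{m}(y)\, D||\cdot||(y-P_{m-1}(y))(v-P_{m-1}(v))\, y_m + \xi^{j}_{m}(y)\, v_m\right] e_m$$
for every $y\in V_j$ and every $v\in E$. Setting
$$V := B(x^n, r_n)\cap \bigcap_{j=1}^{n} V_j,$$
which is an open neighbourhood of $x$ contained in $B(x^n, r_n)$, and applying the chain rule $D\delta_j(y) = Df(\Psi_j(y))\circ D\Psi_j(y)$ on $V$, we obtain \eqref{expresssion for the derivative of delta-c0} for every $j=1,\dots,n$.

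The argument is essentially a bookkeeping exercise; the only mild subtlety is to fix $n$ first so that only finitely many $\Psi_j$ are involved, and only then to intersect the neighbourhoods supplied by Lemma \ref{lema 5 from Az et al}$(3)$, so that the resulting $V$ remains open. No substantial obstacle is expected, since both of the local-finiteness ingredients have already been established.
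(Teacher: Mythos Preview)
Your proposal is correct and follows essentially the same route as the paper: define $n=n_x=\min\{m:x\in B(x^m,r_m)\}$, use the local finiteness of $\{h_j\}$ to collapse the sum on $B(x^n,r_n)$ and differentiate termwise, then apply part $(3)$ of Lemma \ref{lema 5 from Az et al} to each $\Psi_j$ for $j=1,\dots,n$ and intersect the resulting neighbourhoods (the paper takes them already inside $B(x^n,r_n)$, you intersect with it afterwards, which is immaterial).
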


\begin{proof}
Recall that for every $x\in U$ there is $n_{x}=n=\min \{m\in \N:\,x\in B(x_m,r_m)\}$ such that $h_m(y)=0$ for every $m>n$ and every $y\in B(x_n,r_n)$. So expression \eqref{approximating function-c0} becomes
$$g(y):=\sum^{n}_{j=1}h_j(y)(f(x^j)+ Df(x^j)(y-x^j)-\delta_j(y)+T_j(y-x^j))$$
for all $y\in B(x_n,r_n)$. Computing the derivative we get 
$$Dg(y)=\sum^{n}_{j=1}h'_j(y)\left[ f(x^j)+Df(x^j)(y-x^j)-\delta_j(y)+T_j(y-x^j)\right] +\sum^{n}_{j=1}h_j(y)\left[ Df(x^j)-D\delta_j(y)+T_j\right], $$
for every $y\in B(x_n,r_n)$.

For every $j=1,\dots,n$, by $(3)$ of Lemma \ref{lema 5 from Az et al}, we can find a neighbourhood $V_{x,j}\subset B(x_n,r_n)$ of $x$ and a number $k_j$ such that such that for every $y\in V_{x,j}$, 
\begin{equation*}
\begin{array}{l}
D\delta_j(y)(v)=Df(\Psi_j(y))\circ ( D\Psi_j(y)(v))= \\
Df(\Psi_j(y))\circ \left\lbrace\sum^{k_j}_{n=1}\left[ a^{j}_{n}(y)D||\cdot||(y-P_{n-1}(y))(v-P_{n-1}(v))y_n+\xi^{j}_{n}(y)v_n\right]e_n\right\rbrace .
\end{array}
\end{equation*}
Define then $V_x:=\bigcap^{n}_{j=1}V_{x,j}\subset B(x_n,r_n).$
\end{proof}

Using equation \eqref{formula for g'-c0} of Claim \ref{good neighbourhood for x-c0}, we can write 
\begin{align*}
||Dg(x)-Df(x)||\leq&||\sum^{n}_{j=1}h'_j(x)(f(x^j)+Df(x^j)(x-x^j)-\delta_j(x)+T_j(x-x^j)-f(x)||+\\
&+||\sum^{n}_{j=1}h_j(x)(Df(x^j)-D\delta_j(x)+T_j-Df(x))||\leq \\
\leq & \sum^{n}_{j=1}||h'_j(x)||\,(||f(x^j)+Df(x^j)(x-x^j)-\delta_j(x)-f(x)||+||T_j(x-x^j)||)+\\
&+\sum^{n}_{j=1}h_j(x)\left( ||Df(x^j)-Df(x)||+||D\delta_j(x)||+||T_j||\right)
\end{align*}
for every $x\in U$. Let us try to estimate all these quantities. Applying inequality \eqref{estimation from Lemma 3.3} and the bound of $||T_j||$ given by \eqref{norm of T_j} we get
$$||f(x^j)+Df(x^j)(x-x^j)-\delta_j(x)-f(x)||+||T_j(x-x^j)||\leq {\varepsilon(x^j)\over 2^{j+3}M_j}+{\varepsilon(x^j)2r_j\over 2^{j+4}M_j} $$
for every $x\in B(x^j,2r_j)$.
On the other hand  $||Df(x^j)-Df(x)||\leq {\varepsilon(x^j)\over (K_u)^2 72}\leq {\varepsilon(x^j)\over 72}$ by our choice of the partition of unity, and using \eqref{norm of delta_j} and again \eqref{norm of T_j} we have that for every $x\in B(x^j,2r_j)$,
$$||Df(x^j)-Df(x)||+||D\delta_j(x)||+||T_j||\leq {\varepsilon(x^j)\over 72}+ 8{\varepsilon(x^j)\over 72}+{\varepsilon(x^j)\over 8}={\varepsilon(x^j)\over 4}.$$
We also know that the norm of $h_j'(x)$ is bounded by $M_j$ as was indicated when stating the properties of the partition of unity. This fact together with these previous computations allow us to conclude that
\begin{align*}
||Dg(x)-Df(x)|| \leq & \sum^{n}_{j=1}M_j\left( {\varepsilon(x^j)\over 2^{j+3}M_j}+{\varepsilon(x^j)2r_j\over 2^{j+4}M_j}\right) +\sum^{n}
_{j=1}h_j(x)\left( {\varepsilon(x^j)\over 4}\right) \leq \\
\leq & \sum^{n}_{j=1}{\varepsilon(x^j)\over 2^{j+2}}+\sum^{n}_{j=1}h_j(x) {\varepsilon(x^j)\over 4} \leq {\varepsilon(x)\over 2}+{\varepsilon(x)\over 2}=\varepsilon(x)
\end{align*}
for every $x\in U$. We have then proved $(2)$ of Theorem \ref{approximation of f by another one with extractible critical set--c0}.

\medskip

Let us focus now on studying the critical set of points of $g$.

Use Claim \ref{good neighbourhood for x-c0} to choose a vector $x\in U$ for which there exist numbers $n,k_{1},\dots,k_n$ and a neighbourhood $V=V_{x}\subset B(x_n,r_n)$ such that $(i)$ and $(ii)$ of the claim hold.
Define also 
$$\tilde{n}:=\max \{n,k_1,\dots,k_n\}.$$ 

Take $(t_1,\dots,t_d)\in\R^d$ and $y\in V$. Our goal is to find a vector $v\in E$ such that $Dg(y)(v)=(t_1,\dots,t_d)$. Once we prove this we will get $(3)$ of Theorem \ref{approximation of f by another one with extractible critical set--c0}.

\medskip

With $y\in V$ fixed, looking at the formula \eqref{expresssion for the derivative of delta-c0} of Claim \ref{good neighbourhood for x-c0}, we are interested in the expression of the bounded linear operators $h'_j(y),D\delta_j(y),Df(x^j),T_j$ for $j=1,\dots,n$. Let $m=m_y$ be the least number such that $y\in B(x^m,r_m)$, that is $h_m(y)=1$ (observe that necessarily $m\leq n$), then we write equation \eqref{formula for g'-c0} as

$$Dg(y)=\sum^{m}_{j=1}h'_j(y)\left[ f(x^j)+Df(x^j)(y-x^j)-\delta_j(y)+T_j(y-x^j)\right] +\sum^{m}_{j=1}h_j(y)\left[ Df(x^j)-D\delta_j(y)+T_j\right]. $$

We want to find a vector $v\in E$ for which
\begin{align*}
\left\{
        \begin{array}{l}
         h'_j(y)(v)=0\;\;\text{ for every}\;\; 1\leq j\leq m, \\
         D\delta_j(y)(v)=(0,\dots,0);\;\text{ for every}\;\; 1\leq j\leq m,\\
         Df(x^j)(v)=(Df^1(x^j)(v),\dots,Df^d(x^j)(v))=(0,\dots,0),\;\;\text{ for every}\;\; 1\leq j\leq m,\\
         T_{j}(v)=(0,\dots,0)\;\;\text{for every}\;\;1\leq j<m,\\
         h_m(y)T_m(v)=T_m(v)=(t_1,\dots,t_d). 
           \end{array}
         \right.
\end{align*}
Let us pay attention to the vectors $y-x^j$ and $y-P_{i-1}(y)$, for $1\leq j\leq m$ and $1\leq i\leq \tilde{n}$. For simplicity let us rename these vectors as $\{z^1,\dots,z^{k_0}\}$. Each of these elements $z^k$, $1\leq k\leq k_0$, belongs to some ball $B(x^{k'},2r_{k'})$ (for each $k$ we associate a unique $k'$, not necessarily equal to $k$). So by using property (iv) from the beginning of the proof there exists a finite number of continuous linear functionals $\{L_{k'(1)},\dots,L_{k'(l_{k'})}\}$ and a $C^1$ function $\gamma_{k'}:\R^{l_{k'}}\to \R$ such that
$$||z^k||=\gamma_{k'}(L_{k'(1)}(y),\dots,L_{k'(l_{k'})}(y)).$$
We intend to take a vector $v\in \bigcap^{l_{k'}}_{j=1} \Ker\,L_{k'(j)}$, so that $D||\cdot||(z^k)(v)=0$ for every $k=1,\dots,k_0$.

For every $i=1,\dots,d$, let us introduce the finite set  of functionals
\begin{align*}
A_{i}:= &\{e^{*}_1,\dots,e^{*}_{\tilde{n}}\}\cup\{Df^j(x^1),\dots, Df^j(x^{m}):\,1\leq j\leq d\}\cup\{L_{k'(1)},\dots,L_{k'(l_{k'})}:\,1\leq k\leq k_0\}\cup\\
&\cup\{T^{j}_{1},\dots,T^{j}_{m-1}:\,1\leq j\leq d\}\cup\{T^{j}_{m}:\,1\leq j\leq d,j\neq i\}.
\end{align*}

By the definition of $T^{i}_{m}$ we have that $T^{i}_{m}\notin \espan\left( A_{i}\right)$, which is equivalent to saying that $\bigcap_{a^*\in A_{i}}\Ker\,a^*\varsubsetneq \Ker\, T^{i}_{m}. $
Therefore there exists an element $w^{i}\in E$ such that $T^{i}_m(w^{i})\neq 0
$ and $a^*(w^i)=0$ for every $a^*\in A_{i}$.

For every $i=1,\dots,d$, take $v^{i}={t_{i}w^{i}\over T^{i}_{m}(w^{i})}$ and define $v:=v^1+\cdots +v^{d}$, so we have 
$$T_m(v)=(T^{1}_{m}(v),\dots T^{d}_{m}(v))=(T^{1}_{m}(v^1),\dots T^{d}_{m}(v^d))=(t_1,\dots,t_d).$$
Moreover, $D||\cdot||(y-x^j)(v)=0$ for every $1\leq j\leq m$, $D||\cdot||(y-P_{i-1}(y))(v)=0$
for every $1\leq i\leq \tilde{n}$, and $Df(x^j)(v)=(Df^1(x^j)(v),\dots,Df^d(x^j)(v))=(0,\dots,0)$ for every $1\leq j\leq m$.
Furthermore, writing $v$ in coordinates, $v=\sum^{\infty}_{j=1}v_je_j$ we have that $v_1=\cdots=v_{\tilde{n}}=0$.

Recall that $h_j(y)=\theta_j(||y-x^j||)\prod_{k<j}(1-\theta_k(||y-x^k||))$, so 
$$h'_j(y)(\cdot)=\sum^{j}_{k=1}\gamma_{k,j}(y)D||\cdot||(y-x^k)(\cdot),$$
where $\gamma_{k,j}:E\to\R$ are $C^1$ functions. Hence with our choice of $v$ we have $h'_{j}(y)(v)=0$ for every $1\leq j\leq m$. 

On the other hand, looking at formula \eqref{expresssion for the derivative of delta-c0} of Claim \ref{good neighbourhood for x-c0}, we also get $D\delta_j(v)=0$ for every 
$1\leq j\leq m$.

Finally we also have $T_j(v)=(T^{1}_{j},\dots,T^{d}_{j}(v))=(0\dots,0)$ for every $j<m$, because $T^{1}_{j},\dots,T^{d}_{j}\in \bigcap^{d}_{i=1}A_{i}$ for every $j<m$.

Putting all these facts together, we have proved that $Dg(y)(v)=(t_1,\dots,t_d)$ and consequently the critical set of points of $g$ is empty.

\end{proof}

\subsection{Proof of Theorem \ref{approximation of f by another one with extractible critical set--lp}}\label{3.2}
\hspace{6cm}

The essence of the proof will be close to the one of the previous subsection. However there are some important changes. Here we do not rely on a norm that locally depends on finitely many coordinates, but on the property of the basis of being $1$-suppression unconditional, which will provide us with the necessary tools to approximate the function $f$ and its derivative $f'$ by another function with a {\em small} critical set of points.

\begin{proof}[Proof of Theorem \ref{approximation of f by another one with extractible critical set--lp}]
$E$ has a separable dual, so it does not contain copies of $l_1$ and since it has an unconditional basis, by \cite[Theorem 1.c.9]{LindenstraussTzafriri} we know that the basis is also shrinking, that is, $\overline{\espan} \{e^{*}_n:\,n\in\N\}=E^*$.

Using the openness of $U$, the continuity of $\varepsilon$ and $Df$, and the facts that $\overline{\espan} \{e_n:\,n\in\N\}=E$ and $\overline{\espan} \{e^{*}_n:\,n\in\N\}=E^*$, we find a covering
$$\bigcup_{j=1}B(x^j,r_j)=U$$
of $U$ and continuous linear functionals $F_j:E\to\R^d$ for every $j\in\N$ such that:
\begin{enumerate}
\item [(i)] $B(x^j,4r_j)\subset U$  with $r_j\leq 1$ for every $j\in\N$.
\item [(ii)]$\varepsilon(x)\geq {\varepsilon(x^j)\over 2}$ for all $x\in B(x^j,2r_j)$.
\item [(iii)]$||Df(x)-Df(x^j)||\leq {\varepsilon(x^j)\over (K_u)^2 144}$ for every $x\in B(x^j,4r_j)$. 
\item  [(iv)]$||F_j-Df(x^j)||\leq {\varepsilon(x^j)\over (K_u)^2 144}$.
\item [(v)] For every $j\in\N$,
\begin{align*}
\left\{
           \begin{array}{l}
         x^j=\sum^{N_j}_{i=1}\alpha_{i,j}e_i, \\
         F_j=(F^{1}_{j},\dots,F^{d}_{j})=(\sum^{N_j}_{i=1}\beta^{1}_{i,j}e^{*}_{i},\dots,\sum^{N_j}_{i=1}\beta^{d}_{i,j}e^{*}_{i}).
           \end{array}
         \right.
\end{align*}
for some $\alpha_{1,j},\dots,\alpha_{N_j,j},\beta^{q}_{1,j},\dots,\beta^{q}_{N_j,j}\in\R$, $1\leq q\leq d$, where $N_1\leq N_2\leq\dots$ is an increasing sequence of natural numbers. Note that we allow some $\alpha_{i,j}$ or $\beta^{q}_{i,j}$ to be null.
\end{enumerate}

At this point we proceed exactly as in the previous subsection, defining the $C^1$ partition of unity $\{h_j\}_{j\geq 1}$ subordinate to $\{B(x^j,2r_j)\}_{j\geq 1}$, and also the constants $\tilde{M}_k$ and $M_k$. We also apply Lemma \ref{lema 5 from Az et al}, exactly in the same way as before, but now to the function  $f_1(x)=f(x^j)+F_j(x-x^j)-f(x)$ and the constants ${\varepsilon(x^j)\over 2^{j+3}M_j}$ and ${\varepsilon(x^j)\over (K_u)^2 72}$ for $\varepsilon$ and $\eta$ respectively, obtaining $\delta_j=f\circ \Psi_j$.

\medskip

We define finally
\begin{equation}\label{approximating function}
g(x):=\sum^{\infty}_{j=1}h_j(x)(f(x^j)+F_j(x-x^j)-\delta_j(x)+T_j(x-x^j)),
\end{equation}
where $T_j:E\to\R^d$ is a continuous linear surjective operator that will be defined in the following paragraph.

\medskip 

Choose a family of pairwise disjoint subsets $\{I_n\}_{n\geq 1}$ of natural numbers such that each $I_n\subset \N$ has infinite elements and, if we denote $\mathbb{I} =\bigcup_{n\geq 1}I_n$, then $\N\setminus \mathbb{I}$ is infinite. Write also $I_n=I^{1}_{n}\cup\cdots\cup I^{d}_{n}$ as a pairwise disjoint union of sets, each of them having again infinite elements.
For every $j\in\N$ and $i=1,\dots,d$ we choose $T^{i}_{j}\in E^*$ satisfying that  
$$ T^{i}_{j}\in \overline{\espan} \{ e^{*}_{n}:\, n\in I^{i}_{j}\}\setminus \espan \{ e^{*}_{n}:\, n\in I^{i}_{j}\} .$$ 
Define $T_j:=(T^{1}_{j},\dots, T^{d}_{j})$ and also assume with no loss of generality that
$$||T_{j}||\leq\varepsilon(x^j)M^{-1}_{j}2^{-j-4}\leq {\varepsilon(x^j)\over 8}.$$

\medskip

Following the computation made for proving Theorem \ref{approximation of f by another one with extractible critical set--c0} $(1)$ in the previous subsection, we can check that for every $x\in U$,
$$||g(x)-f(x)||=||\sum^{\infty}_{j=1}h_j(x)(f(x^j)-F_j(x-x^j)-\delta_j(x)+T_j(x-x^j)-f(x))||\leq   \varepsilon(x)\sum^{n}_{j=1}h_j(x)=\varepsilon(x),
$$
which proves $(1)$.

To analyze the derivative of $g$ and its set of critical points in order to show $(2)$ and $(3)$ we also have at our disposal the following.
\begin{claim}\label{good neighbourhood for x}
For every $x\in U$ there exist $n,k_1,\dots,k_n\in\N$ and a neighbourhood $V_x=V\subset B(x^n,r_n)$ of $x$ such that:
\begin{enumerate}
\item [(i)] For every $y\in B(x^n,r_n)$,
\begin{equation}\label{formula for g}
g(y):=\sum^{n}_{j=1}h_j(y)(f(x^j)+ F_j(y-x^j)-\delta_j(y)+T_j(y-x^j)),\;\;\text{and}
\end{equation}
\begin{equation}\label{formula for g'}
Dg(y)=\sum^{n}_{j=1}h'_j(y)\left[ f(x^j)+F_j(y-x^j)-\delta_j(y)+T_j(y-x^j)\right] +\sum^{n}_{j=1}h_j(y)\left[ F_j-D\delta_j(y)+T_j\right].
\end{equation}

\item [(ii)] For every $y\in V$ and $1\leq j\leq n$,  $D\delta_j(y)(v)=Df(\Psi_j(y))\circ ( D\Psi_j(y)(v))$ has the form
\begin{equation}\label{expresssion for the derivative of delta}
Df(\Psi_j(y))\circ \left\lbrace\sum^{k_j}_{n=1}\left[ a^{j}_{n}(y)D||\cdot||(y-P_{n-1}(y))(v-P_{n-1}(v))y_n+\xi^{j}_{n}(y)v_n\right]e_n\right\rbrace .
\end{equation}
\end{enumerate}
\end{claim}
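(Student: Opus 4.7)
The plan is to transfer the argument of Claim \ref{good neighbourhood for x-c0} almost verbatim: neither the fact that the coefficient inside the partition of unity is $Df(x^j)$ (rather than $F_j$) nor the hypothesis of finite-coordinate dependence of the norm was used in that earlier bookkeeping, so exactly the same reasoning works here.

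First I would set $n := n_x := \min\{m\in\N : x\in B(x^m, r_m)\}$, which exists since the balls $B(x^m, r_m)$ cover $U$. By construction $\varphi_n \equiv 1$ on $B(x^n, r_n)$, so the factor $(1-\varphi_n)$ appearing in every $h_m$ with $m > n$ vanishes identically on $B(x^n, r_n)$. Hence $h_m \equiv 0$ on $B(x^n, r_n)$ for every $m>n$, and the series defining $g$ in \eqref{approximating function} collapses on $B(x^n, r_n)$ to the finite sum \eqref{formula for g}. Differentiating this finite sum of $C^1$ functions term by term then yields \eqref{formula for g'}, which proves (i).

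For (ii), for each $j\in\{1,\dots,n\}$ I would apply conclusion (3) of Lemma \ref{lema 5 from Az et al} to the map $\Psi_j$ at the point $x$; this produces an integer $k_j$ and an open neighbourhood $V_{x,j}$ of $x$ on which $D\Psi_j(y)(v)$ has the exact finite-sum form displayed inside the braces in \eqref{expresssion for the derivative of delta}. Since $\delta_j = f\circ \Psi_j$, the chain rule gives $D\delta_j(y) = Df(\Psi_j(y))\circ D\Psi_j(y)$; substituting the finite expansion for $D\Psi_j$ then produces the expression in \eqref{expresssion for the derivative of delta}. Finally I would set $V := B(x^n, r_n)\cap \bigcap_{j=1}^n V_{x,j}$, which is open and contains $x$, and use this same $V$ in both (i) and (ii).

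There is essentially no obstacle in the present claim: it is purely bookkeeping that combines the strong local finiteness of the partition $\{h_j\}$ at $x$ (coming from the nested factors $1-\varphi_k$) with the local finite-sum representation of $D\Psi_j$ provided by Lemma \ref{lema 5 from Az et al}. The genuinely non-trivial part of the subsection is not here but in the subsequent use of this claim to analyse $Dg(y)$ and locate its critical set inside a finite union of complemented subspaces of infinite codimension via the functionals $T^i_j$.
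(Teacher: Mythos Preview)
Your proposal is correct and is exactly the approach the paper takes: its own proof of this claim is the single line ``Follow the proof of Claim \ref{good neighbourhood for x-c0},'' and you have faithfully reproduced that argument with $F_j$ in place of $Df(x^j)$. The only cosmetic difference is that the paper chooses the $V_{x,j}$ already inside $B(x^n,r_n)$ and sets $V_x=\bigcap_{j=1}^n V_{x,j}$, whereas you intersect with $B(x^n,r_n)$ at the end; the result is the same.
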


\begin{proof}
Follow the proof of Claim \ref{good neighbourhood for x-c0}.
\end{proof}

Using equation \eqref{formula for g'} of Claim \ref{good neighbourhood for x}, a straightforward calculation as in the previous subsection gives
\begin{align*}
||Dg(x)-Df(x)||\leq& \sum^{n}_{j=1}||h'_j(x)||\,(||f(x^j)+F_j(x-x^j)-\delta_j(x)-f(x)||+||T_j(x-x^j)||)+\\
&+\sum^{n}_{j=1}h_j(x)\left( ||F_j-Df(x^j)||+||Df(x^j)-Df(x)||+||D\delta_j(x)||+||T_j||\right)\leq \varepsilon(x)
\end{align*}
for every $x\in U$. We have thus proved $(2)$ of Theorem \ref{approximation of f by another one with extractible critical set--lp}.

\medskip

It remains to study the critical set of $g$.

Take a vector $x\in U$. By Claim \ref{good neighbourhood for x} there exist numbers $n,k_{1},\dots,k_n$ and a neighbourhood $V=V_{x}\subset B(x_n,r_n)$ such that (i) and (ii) of the claim hold. Define also 
$$\tilde{n}:=\max \{n,N_n,k_1,\dots,k_n\}.$$ 

Let us divide the set $\N\setminus \mathbb{I}=\mathbb{J}$ in another disjoint infinite family of subsets $\{J_n\}_{n\geq 1}$, each of them having infinite elements. Consider also the set
\begin{equation}\label{equation for A}
A=\left\lbrace y-x^j,y-P_{i-1}(y):\,j=1,\dots,n\;i=1\dots,\tilde{n}\right\rbrace ,
\end{equation}
and define $k_0:=\dim (\{\espan (A)\})\leq n+\tilde{n}$.

In order to establish $3$ of Theorem \ref{approximation of f by another one with extractible critical set--lp} our goal is to show that if
$$y\in V\setminus \left( \bigcup^{k_0}_{k=1}\overline{\espan} \{e_j:\,j=1,\dots,\tilde{n}\;\text{or}\;j\in\N\setminus J_k\}\right),$$
and $t=(t_1,\dots,t_d)\in\R$ then there exists a vector $v\in E$ such that $Dg(y)(v)=t$. Indeed for every $x\in U$ we would have found a neighbourhood $V_x=V$ such that 
$$C_g\cap V\subseteq \left( \bigcup^{k_0}_{k=1}\overline{\espan} \{e_j:\,j=1,\dots,\tilde{n}\;\text{or}\;j\in\N\setminus J_k\}\right).$$

\medskip

Fix $y\in V\setminus \left( \bigcup^{k_0}_{k=1}\overline{\espan} \{e_j:\,j=1,\dots,\tilde{n}\;\text{or}\;j\in\N\setminus J_k\}\right)$ and look at the formula of $Dg(y)$ given by property (i) of Claim \ref{good neighbourhood for x}. We are interested in the expression of the continuous linear operators $h'_j(y),F_j,D\delta_j(y),T_j$ for $j=1,\dots,n$. Let $m=m_y$ be the least number such that $y\in B(x^m,r_m)$, that is $h_m(y)=1$ (observe that necessarily $m\leq n$), then we may write equation \eqref{formula for g'} as

$$Dg(y)=\sum^{m}_{j=1}h'_j(y)\left[ f(x^j)+F_j(y-x^j)-\delta_j(y)+T_j(y-x^j)\right] +\sum^{m}_{j=1}h_j(y)\left[ F_j-D\delta_j(y)+T_j\right]. $$

We need to find a vector $v\in E$ for which

\begin{align*}
\left\{
        \begin{array}{l}
         h'_j(y)(v)=0\;\;\text{ for every}\;\; 1\leq j\leq m, \\
         D\delta_j(y)(v)=(0,\dots,0);\;\text{ for every}\;\; 1\leq j\leq m,\\
         F_j(v)=(F^{1}_{j}(v),\dots,F^{d}_{j}(v))=(0,\dots,0),\;\;\text{ for every}\;\; 1\leq j\leq m,\\
         T_{j}(v)=(0,\dots,0)\;\;\text{for every}\;\;1\leq j<m,\\
         h_m(y)T_m(v)=T_m(v)=(t_1,\dots,t_d). 
           \end{array}
         \right.
\end{align*}

By definition of $y$ there exist $j(1),\dots,j(k_0)>\tilde{n}$ such that $j(1)\in J_1,\dots,j(k_0)\in J_{k_0}$ and $y_{j(1)},\dots,y_{j(k_0)}\neq 0$. Furthermore the vectors $y-x^j$ have their $j(1),\dots,j(k_0)^{th}$-coordinates non-null because we had $x^j\in \espan\{e_1,\dots,e_{N_j}\}\subseteq\espan\{e_1,\dots,e_{N_n}\}\subseteq \espan\{e_1,\dots,e_{\tilde{n}}\}$. This implies that the $j(1),\dots,j(k_0)^{th}$-coordinates of all the vectors in the set $A$ (see expression \eqref{equation for A}) are non-null. 

We will need the following:

\begin{fact}\label{fact about suppression basis}
For every $w=\sum_{j=1}^{\infty}w_j e_j\in E\setminus\{0\}$ and every $j_0\in\N$ we have that
$$
w_{j_0}\neq 0 \implies D||\cdot||(w)(e_{j_0})\neq 0.
$$
\end{fact}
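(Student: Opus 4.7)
My plan is to reduce the statement to a one-dimensional convexity / strict convexity argument. I will define $\varphi\colon\R\to[0,\infty)$ by $\varphi(t):=\|w+te_{j_0}\|$, which is convex (the norm composed with an affine map), and since $w\neq 0$ it is differentiable at $t=0$ with $\varphi'(0)=D\|\cdot\|(w)(e_{j_0})$. I will argue by contradiction, assuming $\varphi'(0)=0$.

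From convexity plus $\varphi'(0)=0$ the subgradient inequality forces $\varphi(t)\geq \varphi(0)=\|w\|$ for every $t\in\R$. On the other hand, evaluating at $t=-w_{j_0}$ and invoking the $1$-suppression unconditionality of the basis gives
\[
\varphi(-w_{j_0})=\Bigl\|\sum_{j\neq j_0}w_j e_j\Bigr\|\leq \|w\|.
\]
Combining the two estimates, $\varphi(-w_{j_0})=\|w\|$. Since $w_{j_0}\neq 0$, the points $0$ and $-w_{j_0}$ are distinct minima of $\varphi$, so by convexity $\varphi$ is constantly equal to $\|w\|$ on the segment joining them; in particular, at the midpoint,
\[
\Bigl\|w-\tfrac{w_{j_0}}{2}e_{j_0}\Bigr\|=\varphi(-w_{j_0}/2)=\|w\|.
\]

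To reach a contradiction I will invoke strict convexity. The equality $\|w-w_{j_0}e_{j_0}\|=\|w\|>0$ first rules out the degenerate case $w=w_{j_0}e_{j_0}$ (which would give $0=\|w\|$). Hence $u_1:=w/\|w\|$ and $u_2:=(w-w_{j_0}e_{j_0})/\|w\|$ are two distinct unit vectors, differing by the nonzero vector $w_{j_0}e_{j_0}/\|w\|$, and the previous display says that their midpoint $(u_1+u_2)/2$ also lies on the unit sphere. This contradicts the strict convexity of $\|\cdot\|$, so $\varphi'(0)=D\|\cdot\|(w)(e_{j_0})\neq 0$, as claimed.

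I do not foresee a real obstacle here; the argument is short. The only subtlety I will be careful about is the possibility that the line $\{w+te_{j_0}\}$ passes through the origin, where the norm fails to be $C^1$, but as noted this only occurs when $w$ is a scalar multiple of $e_{j_0}$ and is dispatched by the positivity of $\|w\|$ before strict convexity is used. The convexity inequality at $t=0$ needs only differentiability at that single point, which we have since $w\neq 0$, so no global smoothness of $\varphi$ is required.
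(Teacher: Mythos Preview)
Your proof is correct and proceeds along exactly the lines the paper indicates: it combines the $1$-suppression property (to get $\varphi(-w_{j_0})\leq\|w\|$) with strict convexity (to derive a contradiction from the resulting flat segment in $\varphi$). The paper itself does not spell out the argument but refers to \cite[Fact 4.5]{ADG-B}; your write-up is precisely the standard way to fill in those details, and your handling of the degenerate case $w=w_{j_0}e_{j_0}$ is clean.
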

\begin{proof}
This is a consequence of the facts that the norm is strictly convex and the basis $\{e_n\}_{n\in\N}$ is $1$-suppression unconditional. For details see for example \cite[Fact 4.5]{ADG-B}.
\end{proof}

Consequently we can assure that
$$e_{j(k)}\notin \bigcap_{a\in A} \Ker (D||\cdot||(a)) $$
for every $1\leq k\leq k_0$. For every $i=1,\dots,d$, let us define $E^{i}_{(m,\tilde{n})}=\overline{\espan}\{e_n:\,n>\tilde{n}\;\text{and}\;n\in\mathbb{J}\cup I^{i}_{m}\}$. Since $k_0=codim \left( \bigcap_{a\in A}\Ker D||\cdot||(a)\right)$,  we can write 
$$E=\left( \bigcap_{a\in A}\Ker D||\cdot||(a)\right) \oplus \espan \{e_{j(1)},\dots,e_{j(k_0)}\},$$
so
$$E^{i}_{(m,\tilde{n})}=\left( \bigcap_{a\in A}\Ker D||\cdot||(a)\cap E^{i}_{(m,\tilde{n})}\right) \oplus \espan\{e_{j(1)},\dots,e_{j(k_0)}\}.$$

On the other hand $e_{j(1)},\dots,e_{j(k_0)}\in \Ker\, T^{i}_{m}$ for every $i=1,\dots,d$. In particular we can find an element 
$$w^i\in \left( \bigcap_{a\in A}\Ker D||\cdot||(a)\cap E^{i}_{(m,\tilde{n})}\right) \setminus\left( \Ker \,T^{i}_{m}\right) .$$
Otherwise we would have $\left( \bigcap_{a\in A}\Ker D||\cdot||(a)\cap E^{i}_{(m,\tilde{n})}\right) \subset \Ker\, T^{i}_{m}$ which implies that $T^{i}_{m}(w^i)=0$ for every $w\in E^{i}_{(m,\tilde{n})}$, a contradiction with the definition of $T^{i}_{m}$.

\medskip

Let us now mix all these previous ingredients together. The vector $v$ we are looking for is
$$v:=\sum^{d}_{i=1}{t_iw^i\over T^{i}_{m}(w^i)}.$$
We obviously have $T_m(v)=(T^{1}_{m}(v),\dots,T^{d}_{m}(v))=(t_1,\dots,t_d)$, so it remains to check that $h'_j(v)=0$, that $D\delta_j(v)=F_j(v)=(0,\dots,0)$ for every $j=1,\dots,m$ and that $T_j(v)=(0,\dots,0)$ for every $j<m$.

For the $h'_j$, recall that $h_j(y)=\theta_j(||y-x^j||)\prod_{k<j}(1-\theta_k(||y-x^k||))$. So we have that
$$h'_j(y)(\cdot)=\sum^{j}_{k=1}\gamma_{k,j}(y)D||\cdot||(y-x^k)(\cdot),$$
where $\gamma_{k,j}:E\to\R$ are $C^1$ functions. The elements $y-x^j$ belong to the set $A$ so it is clear that $h'_j(v)=0$ for every $1\leq j\leq m$.

For the $D\delta_j$, using \eqref{expresssion for the derivative of delta} and the facts that the elements $y-P_{i-1}(y)$ belong to the set $A$ and that the coordinates $v_1,\dots,v_{\tilde{n}}=0$, we conclude that $D\delta_j(v)=0$ for every $1\leq j\leq m$.

The fact that $F_j(v)=(0,\dots,0)$ is clear since 
$$F_j=(F^{1}_{j},\dots,F^{d}_{j})=(\sum^{N_j}_{i=1}\beta^{1}_{i,j}e^{*}_{i},\dots,\sum^{N_j}_{i=1}\beta^{d}_{i,j}e^{*}_{i}),$$
$N_j\leq N_n\leq\tilde{n}$ for every $j=1\dots,m$ and  $v_1,\dots,v_{\tilde{n}}=0$. 

Finally we also have $T_j(v)=(0,\dots,0)$ for every $j<m$, because $v\in \overline{\espan}\{e_n:n\in\mathbb{J}\cup I_m\}$ and $(\mathbb{J}\cup I_m)\cap I_j=\emptyset$ for every $j<m$.

We have proved that $Dg(y)(v)=(t_1\dots,t_d)$ and consequently the critical set of points of $g$ is locally contained in a finite union of complemented subspaces of infinite codimension in $E$. 
\end{proof}

\section{Main result}

Theorems \ref{approximation of f by another one with extractible critical set--c0} and \ref{approximation of f by another one with extractible critical set--lp} above give us an approximation of a $C^{1}$ function $f:E\to\R^d$ and of its derivative by another function $g:E\to \R^d$ which has a {\em nice} critical set of points $C_g$. In the case of Theorem \ref{approximation of f by another one with extractible critical set--c0} the term {\em nice} means we are in the best situation where $C_g=\emptyset$. And in the case of Theorem \ref{approximation of f by another one with extractible critical set--lp} the term {\em nice} will mean for us that the closed set $C_g\subseteq U$ has the $\varepsilon$-strong $C^1$ extraction property with respect to $E$, that is, there exists a $C^{1}$ diffeomorphism $h:E\to E\setminus C_g$ such that $h$ is the identity outside $U$ and $h$ refines a given open cover $\mathcal{G}$ of $E$. With these functions at our disposal, and with the help of Proposition \ref{extractibility of a finite union of complemented subspaces} we can prove our main Theorems \ref{technical version_c0} and \ref{technical version_lp}.

\begin{proof}[Proofs of Theorems \ref{technical version_c0} and \ref{technical version_lp}]
Firstly we choose another $C^1$ function $\delta:E\to[0,\infty)$ such that $\delta^{-1}(0)=E\setminus U$ and $\delta(x)\leq\varepsilon(x)$ for every $x\in E$. This is doable because in every separable Banach space with a $C^1$ equivalent norm, every closed set is the zero set of a $C^1$ function \footnote{Wells proved in his thesis \cite{Wells} that if a separable Banach space $E$ admits a $C^1$ smooth Lipschitz bump function, that is a $C^1$ non-null function $\lambda:E\to[0,\infty)$ with bounded derivative and bounded support, then every closed set $X$ of $E$ is the zero set of some $C^1$ function. Since a Banach space admitting an equivalent $C^1$ norm has a $C^{1}$ smooth Lipschitz bump function our statement is correct.}.

By Theorems \ref{approximation of f by another one with extractible critical set--c0} or \ref{approximation of f by another one with extractible critical set--lp} there exists a $C^1$ function $g:U\to\R^d$ such that
\begin{enumerate}
\item $||f(x)-g(x)||\leq {\delta (x)\over 2}$ for every $x\in U$;
\item $||Df(x)-Dg(x)||\leq{\delta(x)\over 2}$ for every $x\in U$;
\item $C_g=\emptyset$ in the case of Theorem \ref{technical version_c0}, or $C_g$ is locally contained in subspaces of infinite codimension in $E$ in the case of Theorem \ref{technical version_lp}.
\end{enumerate}
Let us extend now this function $g:U\to \R^d$ to the whole space $E$ by letting it be equal to $f$ outside $U$. We keep calling this extension by $g$ and it is important to note that this function is still of class $C^1$. The only points where this fact could not be clear are those from the boundary of $U$. However the Fr\'echet derivative of $g$ at those points $x\in\partial U$ exists and is $Df(x)$ because
\begin{align*}
\limsup_{h\to 0}{||g(x+h)-g(x)-Df(x)(h)||\over ||h||}\leq&\limsup_{h\to 0}{||g(x+h)-f(x+h)+f(x)-g(x)||\over ||h||}+\\
&+\limsup_{h\to 0}{||f(x+h)-f(x)-Df(x)(h)||\over ||h||}=\\
=&\limsup_{h\to 0}{||g(x+h)-f(x+h)||\over ||h||}+0\leq \\
\leq& \lim_{h\to 0}{\delta (x+h)-\delta(x)\over ||h||}=0.
\end{align*}

Here we are using the facts that $f$ is Fr\'echet differentiable in $\partial U$ and that $f(x)=g(x)$ and $\delta(x)=\delta'(x)=0$ for every $x\in \partial U$.

We have just shown that $g$ is Fr\'echet differentiable on $E$, but it remains to show that it is $C^1$. Straightforwardly for every $x\in\partial U$,

$$\lim_{y\to x, y\notin U}||Dg(y)-Df(x)||=\lim_{y\to x, y\notin U}||Df(y)-Df(x)||=0$$
and 
$$\limsup_{y\to x, y\in U}||Dg(y)-Df(x)||\leq\lim_{y\to x, y\in U}(||Dg(y)-Df(y)||+||Df(y)-Df(x)||)\leq\lim_{y\to x, y\in U} \delta(y)=0,$$
by the continuity of $Df$, property $(2)$ of Theorems \ref{approximation of f by another one with extractible critical set--c0} and \ref{approximation of f by another one with extractible critical set--lp} and because $\delta^{-1}(0)=E\setminus U$.

\begin{enumerate}
\item {\bf Case of Theorem \ref{technical version_c0}:}
Define $\varphi=g$ and we obtain that
$$
||\varphi(x)-f(x)||\, , \, ||D\varphi(x)-Df(x)||\leq \delta(x)\leq\varepsilon(x)
$$ for all $x\in E$ and $\varphi(x)=f(x)$ for every $x\in E\setminus U$.
Besides, it is clear that $\varphi$ does not have any critical point. 
\item {\bf Case of Theorem \ref{technical version_lp}:}
We will extract the critical set $C_g$ in the following way. Observe that $C_g$ is a closed set included in $U$ (note that $C_g\cap \partial U= \emptyset$ because $Dg(x)=Df(x)$ is surjective for every $x\in\partial U$), and by $(3)$ of Theorem \ref{approximation of f by another one with extractible critical set--lp} is locally contained in a finite union of complemented subspaces of infinite codimension. Using Proposition \ref{extractibility of a finite union of complemented subspaces}, there exists a $C^1$ diffeomorphism $h:E\to E\setminus C_{g}$
which is the identity outside $U$ and is limited by the open cover $\mathcal{G}$ that we next define. Recall that we have
$$
||f(x)-g(x)||\leq \delta(x)/2$$
for all $x\in E$. Since $g$ and $\delta$ are continuous, for every $z\in E$ there exists
$\eta_{z}>0$ so that if $x,y\in B(z,\eta_{z})$ then $||g(y)-g(x)||\leq
\delta(z)/4\leq\delta(x)/2$. We set $\mathcal{G}=\{B(x,\eta_{x})\, :\, x\in E\}$.\\
Finally, let us define $$\varphi=g\circ h.$$
Since $h$ is limited by $\mathcal{G}$ we have that, for any given $x\in E$, there exists $z\in E$ such that $x,
h(x)\in B(z,\eta_{z})$, and therefore $|g(h(x))-g(x)|\leq\delta(z)/4,$ that is, we have that
      $$
      ||g(x)-\varphi(x)||\leq\delta(z)/4\leq\delta(x)/2.
      $$
We obtain that
$$
||f(x)-\varphi(x)||\leq \delta(x)\leq\varepsilon(x)
$$ for all $x\in E$. Furthermore $h$ is the identity outside $U$ so $\varphi(x)=g(x)=f(x)$ for every $x\in E\setminus U$.
Besides, it is clear that $\varphi$ does not have any critical point: since $h(x)\notin C_{g}$, we have that the 
linear map $Dg(h(x))$ is surjective for every $x\in E$, and $Dh(x): E\to E$ is a linear
isomorphism, so $D\varphi(x)=Dg(h(x))\circ Dh(x)$ is surjective  for every
$x\in E$.
\end{enumerate}
\end{proof}

The following corollary should be compared with \cite[Theorem 1.1]{AzagraJimenez_2}, \cite[Theorem 1.5]{AzagraCepedello} or \cite[Corollary 8]{AzagraJimenez}. These results are related with the failure of Rolle's theorem in infinite-dimensional Banach spaces.
\begin{corollary}
Let $E$ be a Banach space satisfying the conditions of Theorem \ref{technical version_c0} (in particular $E=c_0$). Then for every open set $U$ there exists a $C^1$ bump function $\lambda:E\to[0,\infty)$ whose support is the closure of $U$ and does not have any critical point in $U$.

\end{corollary}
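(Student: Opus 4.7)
My plan is to apply Theorem \ref{approximation of f by another one with extractible critical set--c0} on $U$ and then extend the resulting map by zero on $E\setminus U$, exactly as in the proofs of Theorems \ref{technical version_c0} and \ref{technical version_lp}. As the initial ingredient I take a $C^1$ function $\delta\colon E\to[0,\infty)$ with $\delta^{-1}(0)=E\setminus U$ (which exists by the Wells footnote in the proof of the main theorems) and set $\lambda_{0}:=\delta^{2}$. Then $\lambda_{0}$ is $C^1$, nonnegative, and $\lambda_{0}^{-1}(0)=E\setminus U$. Since $\delta$ attains a global minimum at every $x\in\partial U$, we have $\delta(x)=D\delta(x)=0$, hence $\delta(y)=o(\|y-x\|)$ and $\lambda_{0}(y)=o(\|y-x\|^{2})$ as $y\to x$, and $D\lambda_{0}=2\delta\,D\delta\to 0$ at $\partial U$.

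I then apply Theorem \ref{approximation of f by another one with extractible critical set--c0} on $U$ with $f:=\lambda_{0}|_{U}$, $d=1$, and the continuous positive weight $\varepsilon(y):=\lambda_{0}(y)/2$. This produces a $C^1$ map $g\colon U\to\R$ satisfying $|g-\lambda_{0}|\leq\tfrac{1}{2}\lambda_{0}$, $\|Dg-D\lambda_{0}\|\leq\tfrac{1}{2}\lambda_{0}$ and $Dg(y)\neq 0$ for every $y\in U$. In particular $g\geq\tfrac{1}{2}\lambda_{0}>0$ on $U$. I then define
$$
\lambda(x):=\begin{cases} g(x) & \text{if } x\in U,\\ 0 & \text{if } x\in E\setminus U,\end{cases}
$$
so that $\lambda\geq 0$, $\lambda^{-1}(0)=E\setminus U$, $\operatorname{supp}(\lambda)=\overline{U}$, and $D\lambda=Dg\neq 0$ at every point of $U$.

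The only thing left to verify is that $\lambda$ is of class $C^1$ on the whole space $E$, which is done exactly as in Section 4. On $U$ and on the interior of $E\setminus U$ this is clear, so the only issue is at boundary points $x\in\partial U$. There $\lambda(x)=0$, and the estimate
$$
|\lambda(y)|=|g(y)|\leq\tfrac{3}{2}\lambda_{0}(y)=\tfrac{3}{2}\delta(y)^{2}=o(\|y-x\|)
$$
gives Fréchet differentiability at $x$ with $D\lambda(x)=0$; while $\|Dg(y)\|\leq\|D\lambda_{0}(y)\|+\tfrac{1}{2}\lambda_{0}(y)\to 0$ as $y\to x$ from inside $U$ (because $D\lambda_{0}=2\delta\,D\delta\to 0$) ensures that $D\lambda$ is continuous at $x$. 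The main obstacle is precisely this $C^1$-gluing across $\partial U$: one needs $g$ to remain strictly positive throughout $U$ (so that the support is exactly $\overline{U}$) and simultaneously to vanish faster than linearly at $\partial U$ (so that the zero extension is $C^1$). Choosing the quadratic vanishing $\lambda_{0}=\delta^{2}$ together with the proportional tolerance $\varepsilon=\lambda_{0}/2$ reconciles both requirements at once.
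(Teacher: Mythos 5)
Your proof is correct and follows exactly the scheme the paper uses in Section 4: take a Wells function $\delta$ vanishing precisely on $E\setminus U$, run Theorem \ref{approximation of f by another one with extractible critical set--c0} on $U$ with a tolerance proportional to the function being approximated, and glue the zero extension across $\partial U$ using the $C^1$-fine estimate. One small remark: the squaring $\lambda_0=\delta^2$ is harmless but not actually needed --- since $\delta\geq 0$ attains its minimum on $\partial U$, one already has $D\delta=0$ there, hence $\delta(y)=o(\|y-x\|)$ and $D\delta(y)\to 0$, so running the argument directly with $\lambda_0=\delta$ and $\varepsilon=\delta/2$ yields the same conclusions ($g\geq\delta/2>0$ on $U$, $|g(y)|\leq\tfrac32\delta(y)=o(\|y-x\|)$, and $\|Dg(y)\|\leq\tfrac12\delta(y)+\|D\delta(y)\|\to 0$ at $\partial U$).
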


\begin{rem}\label{higher order of differentiability on U}
{\em
We could have gotten that the approximating function $\varphi$ is of class $C^{k}$ (where $k$ is the order of smoothness of the space $E$) inside the open set $U$. To achieve this one should get a version of Lemma \ref{lema 5 from Az et al} exactly as in \cite[Lemma 5]{AFG-GJL}. Doing this we would get from that lemma that the functions $\delta_j(x)$ are of class $C^{k}$. Hence the approximating function $g$ from Theorems \ref{approximation of f by another one with extractible critical set--c0} and \ref{approximation of f by another one with extractible critical set--lp},
$$g(x)=\sum^{\infty}_{j=1}h_j(x)(f(x^j)+F_j(x-x^j)-\delta_j(x)+T_j(x-x^j))$$
is a function of class $C^{k}$ on $U$. 

Moreover, we can find an extracting diffeomorphism $h:E\to E\setminus C_g$ of class $C^{k}$ by Proposition \ref{extractibility of a finite union of complemented subspaces}, hence $\varphi=g\circ h$ will be a $C^{k}$ mapping on $U$.}

\end{rem}

\begin{rem}\label{last_remark} {\em
\hspace{6cm}
\begin{enumerate}
\item 
The space $c_0$ satisfies the conditions of Theorem \ref{technical version_c0}.
The supremum norm in $c_0$ locally depends on finitely many coordinates, so applying  \cite[Theorem 1]{Haj} one gets the existence of an equivalent $C^{\infty}$ smooth norm on $c_0$ that locally depends on finitely many coordinates.
The space $C(K)$, with $K$ a metrizable countable compactum, also satisfies the conditions of Theorem \ref{technical version_c0}.

\item The space $l_p$ satisfies the conditions of Theorem \ref{technical version_lp}.
For every $1<p<\infty$ the canonical norm of $l_p$ is
$$||x||=||\sum^{\infty}_{n=1}x_ne_n||=\left( \sum^{\infty}_{n=1}|x_n|^{p}\right) ^{1/p}.$$
With this expression it is easy to check that the basis is in fact $1$-suppression unconditional with unconditional constant $K_u=1$.
It is also a norm of class $C^k$, where $k$ is defined as follows: $k=\infty$ if $p=2n$, $n\in\N$; $k=2n+1$ if $p=2n+1$, $n\in\N$, and $k$ is equal to the integer part of $p$ if $p\notin\N$. 
\end{enumerate}}
\end{rem}

\section{Acknowledgement}
The author wants to thank professor Daniel Azagra for his help and many suggestions in writing this paper.

\end{document}